\def\a{\alpha}
\def\C{\mathcal{C}}
\def\wC{\widetilde{C}}
\def\wD{\widetilde{D}}
\def\O{\Omega}
\def\c{\operatorname{cr}}
\def\comp{\operatorname{comp}}
\def\n{\nabla}
\def\o{\omega}
\def\s{\operatorname{sign}}
\def\sig{\sigma}
\def\B{\mathbf}
\newcommand\ris[5]{\raisebox{#1mm}{\hspace{#2mm}\includegraphics[width=#3mm]{#4.eps}\hspace{#5mm}}}
\newtheorem{thm}{Theorem}[section]
\newtheorem{thm*}{Theorem}
\newtheorem{prop}[thm]{Proposition}
\newtheorem{cor}[thm]{Corollary}
\newtheorem*{cor*}{Corollary}
\newtheorem{defn}[thm]{Definition}
\newtheorem{rem}[thm]{Remark}
\newtheorem{ex}[thm]{Example}
\newtheorem*{q*}{Question}
\begin{document}

\title[Conway-type polynomial of closed braids]{Coloring link diagrams and Conway-type polynomial of braids}

\author{Michael Brandenbursky}

\vspace{2cm}

\begin{abstract}
In this paper we define and present a
simple combinatorial formula for a 3-variable Laurent polynomial invariant $I(a,z,t)$ of conjugacy classes in Artin braid group $\B B_m$. We show that the Laurent polynomial $I(a,z,t)$ satisfies the Conway skein relation and the coefficients of the 1-variable polynomial $t^{-k}I(a,z,t)|_{a=1,t=0}$ are Vassiliev invariants of braids.
\end{abstract}

\maketitle

\section{Introduction.}

In this work we consider link invariants arising from the Alexander-Conway
and HOMFLY-PT polynomials. The HOMFLY-PT polynomial $P(L)$
is an invariant of an oriented link $L$ (see for example \cite{FYHLMO},
\cite{LM}, \cite{PT}). It is a Laurent polynomial in two
variables $a$ and $z$, which satisfies the following skein relation:
\begin{equation}\label{eq:Homfly-skein}
aP\left(\ris{-4}{-1}{10}{L+}{-1.1}\right)-
a^{-1}P\left(\ris{-4}{-1}{10}{L-}{-1.1}\right)=
zP\left(\ris{-4}{-1.1}{10}{L0}{-1.1}\right).
\end{equation}
The HOMFLY-PT polynomial is normalized in the following way.
If $O_r$ is the $r$-component unlink, then
$P(O_r)=\left(\frac{a-a^{-1}}{z}\right)^{r-1}$. The Conway
polynomial $\nabla$ may be defined as $\nabla(L):=P(L)|_{a=1}$.
This polynomial is a renormalized version of the Alexander
polynomial (see for example \cite{Conway}, \cite{Likorish}). All coefficients
of $\nabla$ are finite type or Vassiliev invariants.

Recently, invariants of conjugacy classes of braids received a considerable attention, since in some cases they define quasi-morphisms on braid groups and induce quasi-morphisms on certain groups of diffeomorphisms of smooth manifolds, see for example \cite{B,BK,Cal,CHH,w-signature,surfaces,HKM1,HKM2,Mal2,Mal1}.

In this paper we present a certain combinatorial construction of a 3-variable Laurent polynomial invariant $I(a,z,t)$ of conjugacy classes in Artin braid group $\B B_m$. We show that the polynomial $I(a,z,t)$ satisfies the Conway skein relation and the coefficients of the  polynomial $t^{-k}I(a,z,t)|_{a=1,t=0}$ are finite type invariants of braids for every $k\geq 2$.
We modify the polynomial $t^{-2}I(a,z,t)|_{a=1,t=0}$, so that the resulting polynomial is a polynomial invariant of links. In addition, we show that this polynomial equals to $zP'_a|_{a=1}$, where $P'_a|_{a=1}$ is the partial derivative of the HOMFLY-PT polynomial, w.r.t. the variable $a$, evaluated at $a=1$. Another interpretation of the later polynomial was recently given by the author in \cite{B1,B2}.

\subsection{Construction of the polynomial $I(a,z,t)$}
\label{subsec-construction}

Recall that the Artin braid group $\B B_m$ on $m$ strings has the following presentation:
\begin{equation}\label{eq:braid-presentation}
\B B_m=\langle\sig_1,\ldots,\sig_{m-1}|\hspace{2mm} \sig_i\sig_j=\sig_j\sig_i,\hspace{1mm}|i-j|\geq2;\hspace{2mm}\sig_i\sig_{i+1}\sig_i=\sig_{i+1}\sig_i\sig_{i+1}\rangle,
\end{equation}
where each generator $\sig_i$ is shown in Figure \ref{fig:braid-gen-sig-i}a.
\begin{figure}[htb]
\centerline{\includegraphics[height=1.6in]{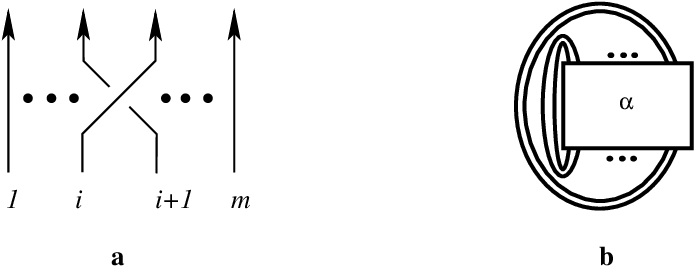}}
\caption{\label{fig:braid-gen-sig-i} Artin generator $\sig_i$ and a closure of a braid $\a$.}
\end{figure}
Let $\a\in \B B_m$. We take any representative of $\a$ and connect it opposite ends by simple nonintersecting curves as shown in Figure \ref{fig:braid-gen-sig-i}b and obtain the oriented link diagram $D$. We impose an equivalence relation on the set such diagrams as follows. Two such diagrams are equivalent if one can pass from one to another by a finite sequence of $\O2a$, $\O2b$ and $\O3$ Reidemeister moves shown in Figure \ref{fig:Reid-moves}.
\begin{figure}[htb]
\centerline{\includegraphics[height=3.3in]{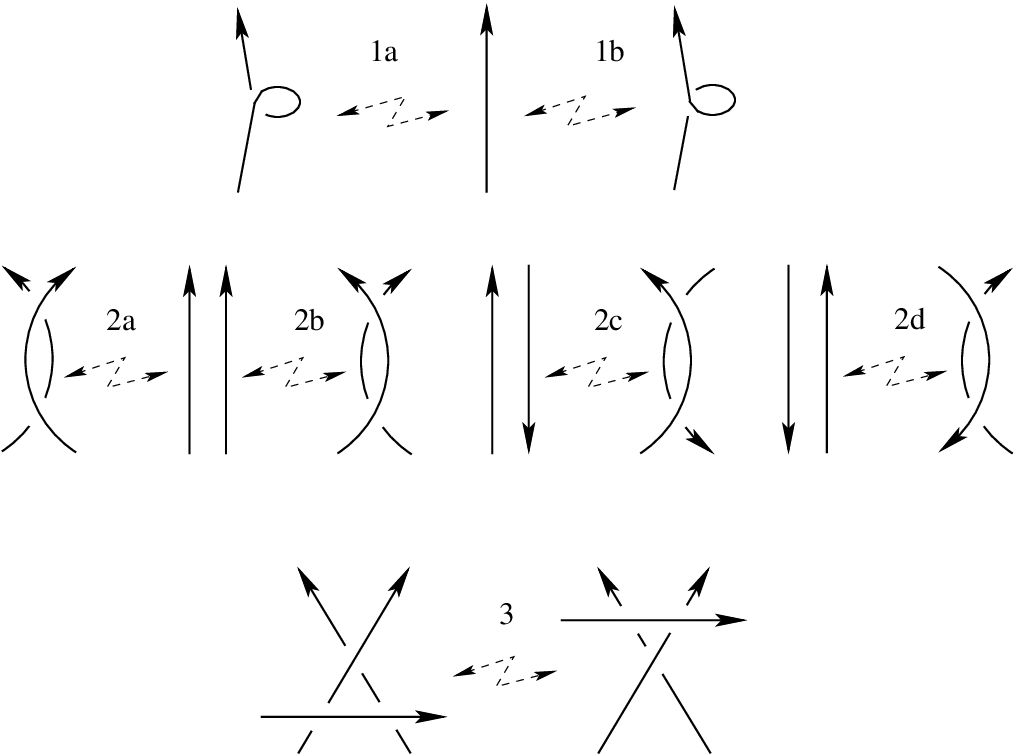}}
\caption{\label{fig:Reid-moves} $\O1a$, $\O1b$, $\O2a$, $\O2b$, $\O2c$, $\O2d$ and $\O3$ Reidemeister moves.}
\end{figure}\\
It follows directly from the presentation \eqref{eq:braid-presentation} of $\B B_m$ that the equivalence class of such diagrams depends on $\a$ and does not depend on the representative of $\a$, see for example \cite{KT}. It is called the \emph{closed braid} and is denoted by $\widehat{\a}$. It is straightforward to show that there is a one-to-one correspondence between the conjugacy classes in the braid groups $\B B_1,\B B_2,\B B_3,\ldots$ and closed braids, see for example \cite{KT}.

Now we are ready to describe our construction of the polynomial $I(a,z,t)$. We fix a natural number $k\geq 2$. Let $D$ be a diagram of an oriented link $L$. We remove from $D$ a small neighborhood of each crossing, see Figure \ref{fig:removement}.
\begin{figure}[htb]
\centerline{\includegraphics[height=0.8in]{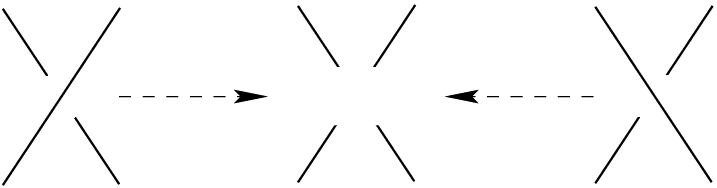}}
\caption{\label{fig:removement} Elimination of a neighborhood.}
\end{figure}
The remaining arcs we will color by numbers from $\{1,\ldots,k\}$ according to the following rule: the adjacent arcs of each crossing we color as shown in Figure \ref{fig:arcs-coloring}a or in Figure \ref{fig:arcs-coloring}b. Note that in Figure \ref{fig:arcs-coloring}a we require that $p<q$ and in Figure \ref{fig:arcs-coloring}b we do not have this requirement, that is, $p$ can possibly be more than or equal to $q$. We also require that for every number in the set $\{1,\ldots,k\}$ there exists at least one arc which is colored by this number. We call a diagram $D$ with colored arcs a \emph{coloring} of $D$. Denote by $\C(D)$ the set of all colorings of $D$. We say that a crossing in a coloring of $D$ is \emph{special} if its adjacent arcs are colored as in Figure \ref{fig:arcs-coloring}a.
\begin{figure}[htb]
\centerline{\includegraphics[height=2.2in]{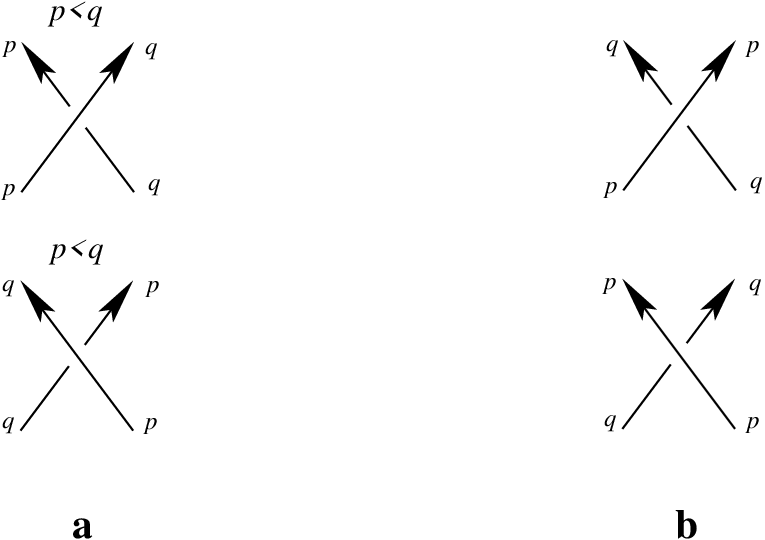}}
\caption{\label{fig:arcs-coloring} Coloring of arcs.}
\end{figure}

Let $j\geq 0$. We denote by $\C(D)_j$ the set of all colorings of $D$ such that each coloring contains \emph{exactly} $j$ special crossings. Note that $\C(D)=\bigcup\limits_j\C(D)_j$. Let $C\in\C(D)_j$, then the \emph{sign} of $C$ is the product of the usual signs of the $j$ special crossings if $j>0$ and $+$ otherwise. We denote it by $\s(C)$. The coloring $C$ defines $k$ oriented links $L_1,\ldots,L_k$ as follows:
We smooth all $j$ special crossings as shown below, and denote by $L_i$ the oriented link whose diagram $D_i$ consists only of components colored by $i$. It is straightforward to see that the diagrams $D_i$ are subdiagrams of $D$ after the smoothing of $j$ special crossings, and so the links $L_i$ are well-defined.

\begin{equation*}
\ris{-4}{-3}{95}{Conway-smoothing}{-1.1}
\end{equation*}

\begin{ex}\rm
Let $k=2$. Figure \ref{fig:T-coloring} shows a coloring $C\in\C(D)_1$, where $D$ is a diagram of the trefoil, together with diagrams $D_1$ and $D_2$.
\end{ex}
\begin{figure}[htb]
\centerline{\includegraphics[height=1.3in]{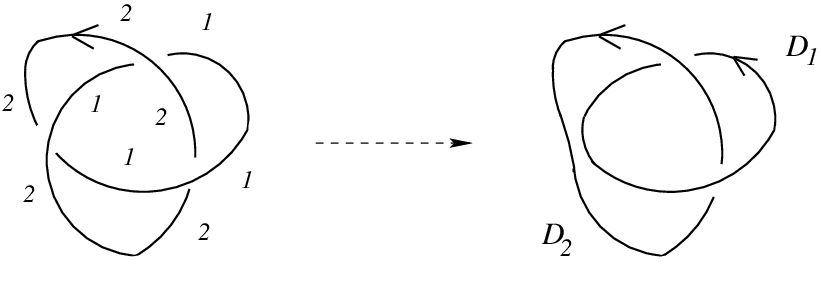}}
\caption{\label{fig:T-coloring} Coloring of the trefoil.}
\end{figure}

\begin{defn}\rm
Denote by $\o(D)$ the \emph{writhe} of $D$, that is, the sum of signs of all crossings in $D$. We define
\begin{equation*}
I_k(D):=\sum_{j=0}^\infty z^j\sum_{C\in\C(D)_j}\s(C)a^{\o(D_1)}P(L_1)\cdot\ldots\cdot a^{\o(D_k)}P(L_k),
\end{equation*}
where $P(L_i)$ is the HOMFLY-PT polynomial of the link $L_i$, whose diagram $D_i$ is induced by a coloring $C$.
\end{defn}

We denote by $\c(D)$ and $\comp(D)$ the number of crossings and connected components in $D$ respectively. Note that if $j>\c(D)$ then by definition $\C(D)_j=\emptyset$. Hence $I_k$ is a well-defined Laurent polynomial.
Set
\begin{equation*}
I(D):=\sum_{k=2}^\infty I_k(D)t^k.
\end{equation*}
Let $k\geq 2$ and $C\in\C(D)$. Recall that by definition each arc of $C$ must be colored by some number from the set $\{1,\ldots,k\}$. Note that if $k$ is big enough, for example if $k> 4\c(D)+\comp(D)$, then $\C(D)=\emptyset$ and so $I_k(D)=0$. It follows that $I(D)$ is a well-defined 3-variable Laurent polynomial in variables $a,z,t$.
\begin{figure}[htb]
\centerline{\includegraphics[width=3.5in]{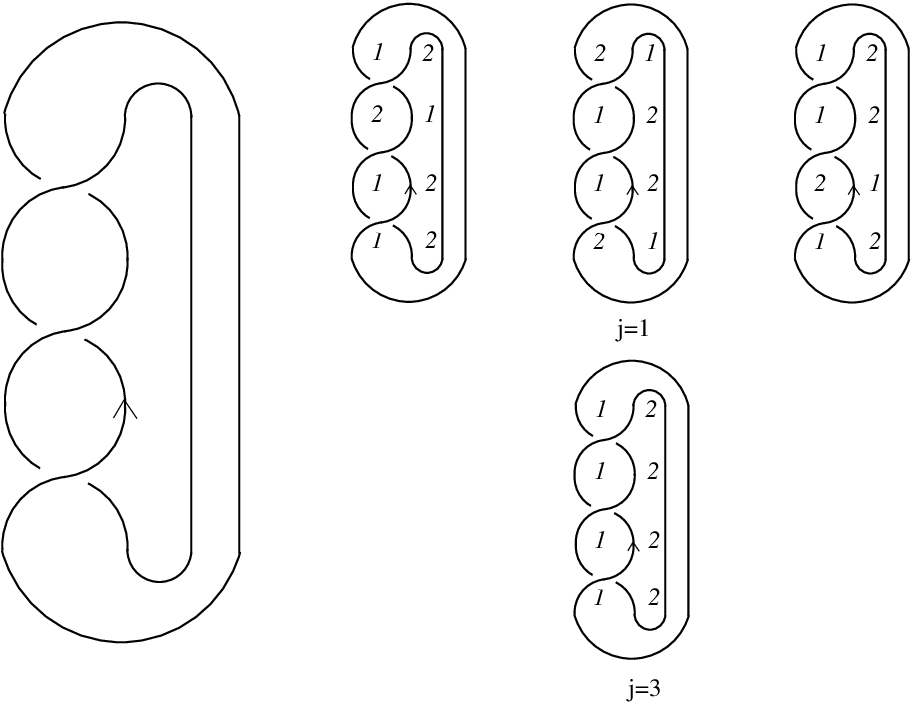}}
\caption{\label{fig:tref-diag-colorings} Diagram of the trefoil together with all possible colorings.}
\end{figure}
\begin{ex}\label{ex:tref-computation}\rm
Let $D$ be an oriented diagram of the trefoil shown on the left of Figure \ref{fig:tref-diag-colorings}. Let us compute $I(D)$. If $k\geq 3$, then $I_k(D)=0$ because in this case the set of colorings $\C(D)$ is empty. Let $k=2$. If $j=0$, then $\C(D)_0=\emptyset$, because the rule of using all colors is violated. Let $j=1$. Then there are 3 possible colorings $C_1$, $C_2$ and $C_3$ of $D$ with $j=1$ special crossings. These colorings are shown on the top-right of Figure \ref{fig:tref-diag-colorings}. Each $C_i$ has a positive sign and it induces, as for example shown in Figure \ref{fig:T-coloring}, two standard diagrams of two unknots, colored by 1 and 2 respectively. Hence the contribution of each $C_i$ is $z$. Let $j=2$. Then $\C(D)_2=\emptyset$. Let $j=3$. Then there is only one possible coloring $C$ of $D$ with $j=3$ special crossings. This coloring has a positive sign and it is shown on the bottom-right of Figure \ref{fig:tref-diag-colorings}. In this case $C$ induces two diagrams of the unknot. Hence the contribution of $C$ is $z^3$. It follows that
$$I(D)=3zt^2+z^3t^2.$$
\end{ex}

\subsection{Main results}
\label{subsec-invariance}
Let us state our main results.

\begin{thm*} Let $\a\in \B B_m$ and let $D$ be any diagram of a closed braid $\widehat{\a}$. Then the polynomial
$I(D)$ is an invariant of the conjugacy class represented by $\a$.
\end{thm*}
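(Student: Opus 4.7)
By the discussion preceding the theorem, any two diagrams of the closed braid $\widehat{\a}$ are related by a finite sequence of $\O2a$, $\O2b$ and $\O3$ Reidemeister moves, so it suffices to establish invariance of $I(D)$ under each of these three moves. Writing $I=\sum_{k\geq 2}I_k t^k$, it further suffices to prove the invariance of each $I_k$ separately; fix $k\geq 2$ throughout.

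The approach is local. Let $\Delta$ be a small disk in which the Reidemeister move takes place and write each coloring $C$ as a pair $(C^{\mathrm{out}},C^{\mathrm{in}})$, with $C^{\mathrm{out}}$ coloring the arcs that lie entirely outside $\Delta$. I would fix $C^{\mathrm{out}}$ together with the colors it induces on the arcs crossing $\partial\Delta$ and compare
\begin{equation*}
\sum_{C^{\mathrm{in}}} z^{j(C)}\,\s(C)\prod_{i=1}^k a^{\o(D_i)}P(L_i)
\end{equation*}
on the two sides of the move; summing over $C^{\mathrm{out}}$ then yields $I_k(D)=I_k(D')$. Note that $\o(D)$ is preserved by $\O2$ and $\O3$, that the writhes $\o(D_i)$ distribute over the color classes, and that since $P$ is a genuine link invariant the only subtlety is how the coloring-induced partition of the diagram into sublinks $L_1,\ldots,L_k$ changes under the local move.

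For $\O2$ invariance, the side with a bigon contains two crossings of opposite sign. I would split the sum over $C^{\mathrm{in}}$ according to whether each of these crossings is special or not. In the subcase where both are non-special, the arc inside the bigon is forced to carry a single color and the sublinks $L_i$ are unchanged, so the contribution equals that of the bigon-free side. In the remaining subcases, each special crossing is replaced by its oriented smoothing, and pairing the resulting colorings via the HOMFLY--PT skein relation \eqref{eq:Homfly-skein} together with the opposite signs contributed to $\s(C)$ by the two bigon crossings produces cancellations that leave exactly the contribution of the bigon-free side. For $\O3$ invariance a similar but lengthier analysis applies: fix the colors of the six external arcs, enumerate the possible colors of the two internal arcs on each side together with the patterns of special crossings, and apply \eqref{eq:Homfly-skein} at each special smoothing. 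Because $P$ is itself $\O3$-invariant on each colored sublink and the writhe factors $\o(D_i)$ are preserved, the two sides reduce to the same expression.

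The main obstacle is the $\O3$ case, where the bookkeeping is the most delicate: one must track how the three crossing strands partition into same-color classes, how smoothings at special crossings combine with the triangle move, and how the HOMFLY--PT skein relation matches colorings across the two sides. I expect the case analysis to reduce to a short list of essentially different subcases, distinguished by the pattern of equalities among the three strand colors, each verified by direct computation.
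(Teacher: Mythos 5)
Your overall strategy is the paper's: reduce to invariance under $\O2a$, $\O2b$ and $\O3$, work term by term in $I=\sum_k I_kt^k$, fix the coloring outside the distinguished disk, and compare the local sums using sign cancellations and the HOMFLY--PT skein relation. The $\O2a$/$\O2b$ part is essentially complete as you describe it: two special crossings in the bigon is impossible, the colorings with one special crossing come in pairs of opposite sign inducing identical sublinks (no skein relation is actually needed here, only the sign cancellation), and the colorings with none biject with those of the bigon-free side.

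The gap is that the $\O3$ case, which is the only genuinely nontrivial step, is left as an expectation, and two of the expectations you state are off in a way that would derail a naive execution. First, the colorings of the two sides do \emph{not} match up preserving the number of special crossings: in the paper's argument for $k=2$, a coloring with one special crossing in the triangle on one side corresponds to a \emph{pair} of colorings on the other side, one with one special crossing and one with two, and the required identity is schematically
\begin{equation*}
I_{2,j}(D)_{C_c}=I_{2,j}(\wD)_{\wC_l}+I_{2,j+1}(\wD)_{\wC_r},
\end{equation*}
which is exactly the HOMFLY--PT skein relation \eqref{eq:Homfly-skein} applied at one crossing of the fragment; the extra power of $z$ coming from the extra special crossing is the $z$ of the skein relation. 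Second, the writhes $\o(D_i)$ of the induced sublinks are \emph{not} preserved under this correspondence: a crossing that is ordinary in one coloring and smoothed (special) or differently distributed among the color classes in the corresponding coloring shifts $\o(D_i)$ by $1$, and this shift supplies precisely the coefficients $a$ and $a^{-1}$ in \eqref{eq:Homfly-skein}. If you insist, as your plan suggests, on a bijection with fixed special-crossing count and preserved sublink writhes, the bookkeeping will not close; once you allow the $j\leftrightarrow\{j,j+1\}$ matching it does. Finally, for $k\geq3$ one more observation is needed: triangle fragments using exactly three colors admit an honest everything-preserving bijection, while fragments using at most two colors reduce verbatim to the $k=2$ computation. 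Your enumeration over internal-arc colors would surface this, but as written the proposal does not establish it.
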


Recall that the \emph{Conway
polynomial} $\n(L)$ is an invariant of an oriented link $L$. It is a polynomial in the
variable $z$, which satisfies the Conway skein relation:
\begin{equation}\label{eq:Conway-pol-skein}
\n\left(\ris{-4}{-1}{10}{L+}{-1.1}\right)-
\n\left(\ris{-4}{-1}{10}{L-}{-1.1}\right)=
z\n\left(\ris{-4}{-1.1}{10}{L0}{-1.1}\right).
\end{equation}
Let us state our second theorem.

\begin{thm*} Let $D$ be a diagram of an oriented link $L$. Then the polynomial
$$t^{-2}I(D)|_{a=1,t=0}-\o(D)z\n(L)$$
is independent of $D$ and hence is an invariant of the link $L$.
\end{thm*}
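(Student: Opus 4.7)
The strategy is to combine Theorem~1 with a direct verification of invariance under $\O1a$ and $\O1b$. By Theorem~1, the polynomial $I(D)$ is invariant under the moves $\O2a$, $\O2b$, $\O3$; since $\o(D)$ and $\n(L)$ are invariant under these moves as well, the difference $t^{-2}I(D)|_{a=1,t=0}-\o(D)z\n(L)=I_2(D)|_{a=1}-\o(D)z\n(L)$ is invariant under them. It therefore suffices to check invariance under $\O1a$ and $\O1b$. Since these moves change $\o(D)$ by $\pm 1$, the problem reduces to showing that adding a positive (resp.\ negative) kink to $D$ changes $I_2(D)|_{a=1}$ by exactly $+z\n(L)$ (resp.\ $-z\n(L)$).

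Let $D'$ denote $D$ with an added positive kink placed on an arc $A$; the kink splits $A$ into pieces $A_1,A_2$ and introduces a new loop arc. The first step is to classify the valid $k=2$ colorings of $D'$ according to whether the kink crossing is non-special or special. In the non-special case the color of the loop agrees with $c(A_1)=c(A_2)$; such colorings biject with valid $k=2$ colorings of $D$, and the contribution of each to $I_2(D')$ equals $a$ times the corresponding contribution to $I_2(D)$, the only change being the unit increase in $\o(D_{c(A)})$. Evaluated at $a=1$ the non-special colorings therefore reproduce $I_2(D)|_{a=1}$ exactly. In the special case the $k=2$ constraint forces $c(A)=1$ and the loop to have color $2$, and the underlying coloring of $D$ is either a bi-color valid $k=2$ coloring of $D$ with $c(A)=1$ or the monochromatic all-$1$ coloring.

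The bi-color special sub-case contributes zero at $a=1$: Conway smoothing of the kink detaches a small unknot from the color-$2$ sublink, so $P(L'_2)=\frac{a-a^{-1}}{z}P(L_2)$ and the net contribution carries a factor of $(a-a^{-1})$. The monochromatic special sub-case contributes exactly $z\n(L)$: the new loop supplies the missing color $2$, and after smoothing one has $L'_1=L$ together with a disjoint unknot $L'_2=O$, producing the contribution $z\cdot a^{\o(D)}P(L)\cdot 1$, which becomes $z\n(L)$ at $a=1$. Assembling everything gives $I_2(D')|_{a=1}=I_2(D)|_{a=1}+z\n(L)$, as required. The $\O1b$ case is analogous: the negative sign of the kink crossing flips the monochromatic contribution to $-z\n(L)$, matching $\Delta\o=-1$.

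The main difficulty is the combinatorial bookkeeping around the monochromatic all-$1$ coloring of $D$: it fails the ``all colors used'' requirement for $I_2(D)$ itself, yet it produces a genuinely new valid $k=2$ coloring of $D'$ once the loop supplies color $2$, and it is precisely this coloring that accounts for the writhe correction. The key technical point making the argument work is that the remaining bi-color special-kink contributions are killed by the $a-a^{-1}$ factor from the detached unknot, so they drop out at $a=1$ and do not interfere with the count.
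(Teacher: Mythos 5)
Your handling of the first Reidemeister moves is sound and essentially reproduces the paper's own argument: the non-special kink colorings biject with colorings of $D$ and reproduce $I_2(D)|_{a=1}$; the special kink colorings in which color $2$ occurs anywhere else die because the detached small circle makes the color-$2$ link split (your $(a-a^{-1})$ factor is the same fact read off from the HOMFLY-PT normalization, and it equals the paper's ``Conway polynomial of split links vanishes'' at $a=1$); and the single surviving monochromatic coloring contributes exactly $z\n(L)$, matching $\Delta\o=\pm1$.

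The genuine gap is in the reduction step. You check invariance only under $\O1a$, $\O1b$, $\O2a$, $\O2b$ and $\O3$, where $\O2a$, $\O2b$ are the coherently oriented second Reidemeister moves (the only ones covered by Proposition \ref{prop:O2}). That collection is not a generating set of oriented Reidemeister moves: the antiparallel moves $\O2c$ and $\O2d$ are not consequences of it, and they are exactly where the substance of this theorem sits. The paper computes that under an $\O2c$ move the two colorings with one special crossing in the new fragment contribute
$\s(\widetilde{C'})z^j a^{\o(\wD'_1)+\o(\wD'_2)}P(L_1)P(L_2)(1-a^{-2})$,
so $I_2$ is \emph{not} invariant under $\O2c$, $\O2d$ unless $a=\pm1$; Corollary \ref{cor:O2cd} is therefore an indispensable ingredient, and the specialization $a=1$ is forced by $\O2c$, $\O2d$ at least as much as by the kink analysis where you invoke it. The paper cites Polyak precisely to justify that $\{\O1a,\O1b,\O2c,\O2d,\O3\}$ is a sufficient set of moves; your five moves are not, and the paper's closing Remark (diagrams of the unlink related by a single $\O2c$ or $\O2d$ move on which $I_k$ disagrees, even though $I_k$ is invariant under $\O2a$, $\O2b$, $\O3$) illustrates how badly things can fail when the antiparallel moves are skipped. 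To complete your proof you must either add the $\O2c$, $\O2d$ verification at $a=1$ or cite a generation result reducing them to the moves you did check --- and no such reduction exists here.
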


Now we recall the notion of a Conway triple of link diagrams.
Let $D_+$, $D_-$ and $D_0$ be a triple of link diagrams
which are identical except for a small fragment, where $D_+$ and
$D_-$ have  a positive and a negative  crossing respectively, and
$D_0$ has a smoothed crossing, see Figures \ref{fig:triple}a and \ref{fig:triple}b.
\begin{figure}[htb]
\centerline{\includegraphics[height=0.9in]{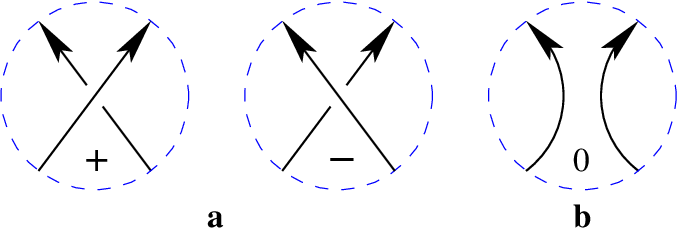}}
\caption{\label{fig:triple} Conway triple.}
\end{figure}
Such a triple of link diagrams is called a \emph{Conway triple}.

\begin{thm*}\label{thm:Conway-skein}
The polynomial $I$ satisfies the Conway skein relation, that is, for each Conway triple of link diagrams $D_+$, $D_-$ and $D_0$ we have
$$I(D_+)-I(D_-)=zI(D_0).$$
\end{thm*}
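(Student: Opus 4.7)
The plan is to prove the skein identity at level $k$, namely $I_k(D_+)-I_k(D_-) = z\, I_k(D_0)$, for each $k \ge 2$, and then sum against $t^k$. Fix the distinguished crossing $c$. I partition the colorings of $D_\pm$ into three classes according to the pattern at $c$---``special'' (pattern (a)), ``non-special with both incident strands colored identically'', and ``non-special with two different colors''---and partition the colorings of $D_0$ into two classes according to whether the two arcs replacing $c$ carry the same or different colors.

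When $c$ is non-special with two different colors on its strands, $c$ lies in no sub-diagram $D_i$, so $\s(C)$ and every $\o(D_i)$ are insensitive to the sign of $c$; the paired contributions of $C_+$ and $C_-$ to $I_k(D_\pm)$ are equal and cancel in the difference. When $c$ is non-special with both strands colored $p$, the crossing $c$ is a genuine crossing of $D_p$; the HOMFLY--PT skein relation
\[aP(L_p^+)-a^{-1}P(L_p^-)=zP(L_p^0)\]
inside $D_p$, combined with the writhe bookkeeping $\o(D_p^+) = \o(D_p^-)+2 = \o(D_p^0)+1$, yields
\[a^{\o(D_p^+)}P(L_p^+)-a^{\o(D_p^-)}P(L_p^-)=z\,a^{\o(D_p^0)}P(L_p^0).\]
Multiplying by $z^j\s(C)$ and by the unchanged factors $a^{\o(D_i)}P(L_i)$ for $i \ne p$ shows that the paired $(C_+,C_-)$ contribution equals $z$ times the contribution of the matching $C_0 \in \C(D_0)$ whose two smoothed arcs are both colored $p$.

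The delicate case is when $c$ is special. Here $c$ contributes its crossing sign $\pm 1$ to $\s(C)$, while the sub-diagrams $D_i$ and their writhes coincide for $C_+$ and $C_-$ (because $c$ has been smoothed away when forming the $D_i$). I establish a bijection between special colorings of $D_\pm$ at $c$ and colorings of $D_0$ whose two smoothed arcs carry distinct colors, in which the two possible ``orderings'' of these distinct colors on the arcs of $D_0$ are matched with the two possible crossing signs of $c$: one ordering is paired with the $C_+$ contributions (sign $+1$) and the other with the $C_-$ contributions (sign $-1$). The subtraction $I_k(D_+) - I_k(D_-)$ then reproduces exactly $z$ times each such $C_0$'s contribution to $I_k(D_0)$. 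The main obstacle is making this bijection precise---verifying that every coloring of $D_0$ with distinct colors on its smoothed arcs is hit exactly once, and that signs, writhes, and HOMFLY factors align on both sides. Once this is in place, summing over the three cases, and over $k$, completes the proof.
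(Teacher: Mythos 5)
Your proposal is correct and follows essentially the same route as the paper: the paper likewise splits into the non-special same-color case (handled by the HOMFLY-PT skein relation with the writhe normalization $a^{\o(D_i)}$), the non-special different-color case (where the $D_+$ and $D_-$ contributions cancel), and the special case, where the two orderings of the distinct colors on the smoothed arcs of $D_0$ are matched with the positive and negative distinguished crossings, the factor $z$ arising from the drop $j\mapsto j-1$ in the number of special crossings. The bijection you flag as the remaining obstacle is exactly the content of the paper's Figure on the $d=1$ case, and it works as you describe.
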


Here we recall the notion of real-valued Vassiliev or finite type invariants of braids. This is a straightforward modification of real-valued finite type link invariants, see \cite{BN1,V2,V1}. Let $v\colon\B B_m\to \B R$ be a real-valued invariant of braids. In the same way as knots are extended to singular knots one can extend the braid group $\B B_m$ to the singular braid monoid $\B{SB}_m$ of singular braids on $m$ stings \cite{Bir}. We extend $v$ to singular braids by using the recursive rule
\begin{equation}\label{eq:FTI-property}
v\left(\ris{-4}{-1}{10}{L-v}{-1.1}\right)=
v\left(\ris{-4}{-1}{10}{L+}{-1.1}\right)-v\left(\ris{-4}{-1}{10}{L-}{-1.1}\right).
\end{equation}
The picture on the left hand side represents a small neighborhood of a singular point in a
singular braid. Those on the right hand side represent the braids which are obtained from the
previous one by a positive and negative resolution of that singular point. The singular braids on the right hand side
have one singular point less then the braid on the left hand side.

An invariant $v$ is said to be of \emph{finite type}
or \emph{Vassiliev}, if for some $n\in\B N$ it vanishes for any singular braid $\a\in \B{SB}_m$ with more than $n$ singular points. A minimal such $n$ is called the degree of $v$. As a corollary of Theorem \ref{thm:Conway-skein} we have

\begin{cor}
The $n$-th coefficient of the polynomial $(t^{-k}I)|_{a=1,t=0}$, where $k\geq 2$, is a finite type invariant of braids of degree $n$.
\end{cor}
\begin{proof}
Let $k\geq 2$. Note that $P|_{a=1}=\n$, hence by definition we have
$$(t^{-k}I)|_{a=1,t=0}=\sum_{j=0}^\infty z^j\sum_{C\in\C(D)_j}\s(C)\n(L_1)\cdot\ldots\cdot \n(L_k),$$
which is a polynomial in the variable $z$. It follows from Theorem 3 that $(t^{-k}I)|_{a=1,t=0}$ satisfies the Conway skein relation, and hence by the same argument as in the proof for the coefficients of the Conway polynomial $\n$, see for example \cite[Page 10]{BN1}, it $n$-th coefficient is a finite type invariant of degree $n$.
\end{proof}

Let $P(L)$ be the HOMFLY-PT polynomial of a link $L$.
We denote by $P'_a(L)$ the first partial derivative of $P(L)$ w.r.t. $a$.
Then $zP'_a(L)|_{a=1}$ is a polynomial in the variable $z$.

\begin{thm*}\label{thm:identification}
Let $D$ be any diagram of a link $L$. Then
\begin{equation*}
t^{-2}I(D)|_{a=1,t=0}-w(D)z\n(L)=zP'_a(L)|_{a=1}.
\end{equation*}
\end{thm*}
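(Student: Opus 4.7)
My plan is to show that both sides of the claimed identity are link invariants satisfying the same augmented Conway-type skein relation and taking the same values on unlinks; this forces them to coincide. Write $G(L) := t^{-2}I(D)|_{a=1,t=0} - w(D)z\n(L)$ and $H(L) := zP'_a(L)|_{a=1}$. The function $G$ is a link invariant by the previous theorem, while $H$ is a link invariant because the HOMFLY-PT polynomial is.

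To derive a common skein relation, I start from the Conway skein identity for $I$ established in the preceding theorem, evaluate at $a = 1$, and extract the $t = 0$ part of $t^{-2}I$ to obtain
\begin{equation*}
t^{-2}I(D_+)|_{a=1,t=0} - t^{-2}I(D_-)|_{a=1,t=0} = z\, t^{-2}I(D_0)|_{a=1,t=0}.
\end{equation*}
Combining this with the writhe shifts $w(D_\pm) = w(D_0) \pm 1$ and the Conway identity $\n(L_+) - \n(L_-) = z\n(L_0)$, a direct algebraic manipulation yields
\begin{equation*}
G(L_+) - G(L_-) - zG(L_0) = -z\n(L_+) - z\n(L_-).
\end{equation*}
For $H$, I differentiate the HOMFLY-PT skein $aP(L_+) - a^{-1}P(L_-) = zP(L_0)$ with respect to $a$, evaluate at $a = 1$, multiply by $z$, and use $P|_{a=1} = \n$; the result is precisely the same relation for $H$.

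Next, I verify that $G$ and $H$ agree on every unlink $O_r$. Taking the crossing-free standard diagram of $O_r$, the writhe vanishes, and only the $k = 2$, $j = 0$ term survives in $t^{-2}I|_{t=0}$; this contribution is a sum over surjective two-colorings of the $r$ components, each summand equal to $\n(L_1)\n(L_2)$. Since $\n$ vanishes on every multi-component unlink, only $r = 2$ contributes, with the two admissible colorings each giving $1$, so $G(O_2) = 2$ and $G(O_r) = 0$ otherwise. A direct differentiation of $P(O_r) = \bigl(\tfrac{a - a^{-1}}{z}\bigr)^{r-1}$ delivers the same three values for $H$.

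Finally, the difference $F := G - H$ is a link invariant satisfying the unaugmented Conway skein $F(L_+) - F(L_-) = zF(L_0)$ and vanishing on all unlinks. A standard induction on the number of crossings finishes: if $F \equiv 0$ on diagrams with fewer than $n$ crossings, then the skein gives $F(L_+) = F(L_-)$ at every crossing of an $n$-crossing diagram (since $L_0$ has $n - 1$ crossings), so $F$ is constant along any sequence of crossing changes reducing the diagram to an unlink, where $F$ vanishes. Hence $G = H$. The only genuinely combinatorial step is the unlink evaluation of $I_2$; the hardest bookkeeping is reducing the $w(D_\pm)z\n(L_\pm)$ corrections to the clean right-hand side $-z\n(L_+) - z\n(L_-)$, but this is forced by the Conway identity for $\n$.
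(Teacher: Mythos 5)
Your proposal is correct and follows essentially the same route as the paper: both sides are shown to satisfy the identical augmented skein relation $X(L_+)-X(L_-)-zX(L_0)=-z\n(L_+)-z\n(L_-)$ and to take the value $2$ on $O_2$ and $0$ on all other unlinks. The only difference is that you spell out the standard crossing-change induction establishing uniqueness of an invariant determined by a skein relation and unlink values, a step the paper leaves implicit with ``the proof follows.''
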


As a corollary we have
\begin{cor}
The $n$-th coefficient of the polynomial $(t^{-2}I)|_{a=1,t=0}-wz\n$ is a finite type link invariant of degree $n$.
\end{cor}
\begin{proof}
The polynomial $zP'_a(L)|_{a=1}$ satisfies the following skein relation
$$zP'_a(L_+)|_{a=1}-zP'_a(L_-)|_{a=1}=z(zP'_a(L_0)|_{a=1}-\n(L_+)-\n(L_-)).$$
It follows that it $n$-th coefficient is a finite type invariant of degree $n$.
\end{proof}

\begin{ex}\rm
Let $D$ be a diagram of the trefoil $T$ shown in Figure \ref{fig:tref-diag-colorings}. Note that $w(D)=3$ and $\n(T)=1+z^2$. In Example \ref{ex:tref-computation} we showed that $t^{-2}I(D)|_{a=1,t=0}=3z+z^3$. Hence
$$t^{-2}I(D)|_{a=1,t=0}-w(D)z\n(T)=-2z^3,$$
and this coincides with the fact that $zP'_a(T)|_{a=1}=-2z^3$.
\end{ex}

\begin{rem}\rm
Let $G$ be a Gauss diagram of $L$ (for a precise definition see for example \cite{GPV,PV}). Another interpretation of the polynomial $zP'_a(L)|_{a=1}$ in terms of counting surfaces with two boundary components in $G$ was given recently by the author in \cite{B1,B2}.
\end{rem}

\section{Proofs}

\subsection{Invariance under certain Reidemeister moves.}

Let $D$ be a link diagram and $k\geq 2$. For each coloring $C\in\C(D)_j$ set
\begin{equation*}
I_{k,j}(D)_C:=\s(C)z^ja^{\o(D_1)}P(L_1)\cdot\ldots\cdot a^{\o(D_k)}P(L_k),
\end{equation*}
where $L_i$ is the link induced by $C$, as explained in Subsection \ref{subsec-construction}.
At the beginning we will prove the following useful

\begin{prop}\label{prop:O2}
The polynomial $I$ is invariant under $\O2a$ and $\O2b$ moves shown in Figure \ref{fig:Reid-moves}.
\end{prop}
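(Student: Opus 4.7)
The plan is to prove invariance by a local analysis of colorings near the region where the $\O2$ move is performed. Let $D$ be a link diagram and $D'$ be obtained from $D$ by an $\O2a$ or $\O2b$ move, introducing two new crossings $x_1, x_2$ of opposite signs in a bigon-shaped region. The two strands $\alpha, \beta$ passing through this region are broken into three arcs each in $D'$, which I denote $\alpha_1, \alpha_2, \alpha_3$ and $\beta_1, \beta_2, \beta_3$; the pairs $\alpha_1, \alpha_3$ and $\beta_1, \beta_3$ match the boundary arcs of $D$, while $\alpha_2, \beta_2$ are the new internal arcs between $x_1$ and $x_2$.

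For each coloring $C\in \C(D)$ with colors $p, q$ on the two strands $\alpha, \beta$, I will enumerate all extensions $C'\in \C(D')$ that agree with $C$ on the external arcs. The colors assigned to $\alpha_2, \beta_2$ and the choice of coloring pattern (Figure \ref{fig:arcs-coloring}a special vs.\ Figure \ref{fig:arcs-coloring}b non-special) at each of $x_1, x_2$ are constrained by the rule at that crossing. The goal is to prove $\sum_{C'\text{ extending }C} I_{k,j}(D')_{C'} = I_{k,j}(D)_C$ for every $C$ and $j$, which yields $I(D')=I(D)$. For each combination of Figure \ref{fig:arcs-coloring}a/b choices at the two crossings, I check consistency by propagating the rules across the bigon and then compute the contribution. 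When both $x_1, x_2$ are non-special and $p\neq q$, the subdiagram $D'_i$ for each relevant color $i$ contains only the over-strand locally and is planar-isotopic to $D_i$; when $p=q$, $D'_i$ differs from $D_i$ precisely by an $\O2$ move, and invariance of the HOMFLY-PT polynomial under $\O2$ gives $P(L'_i)=P(L_i)$. In either case the opposite signs of $x_1, x_2$ ensure that the writhes $\omega(D'_i)$ match $\omega(D_i)$.

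The main obstacle is handling extensions where one or both of $x_1, x_2$ are special. The Conway-type smoothing at a special crossing reroutes the local strand connectivity monochromatically, and the resulting $D'_i$ is no longer obviously an $\O2$-modification of $D_i$. The key technical step will be to verify that either these extensions are forbidden by the coloring rules, because the strict inequality $p<q$ required by Figure \ref{fig:arcs-coloring}a cannot hold simultaneously at both $x_1$ and $x_2$ once one traces through the orientation-induced labelings on the two sides of the bigon, or else the factor $z^2$ and the product of signs of the two crossings combine with the rearranged HOMFLY-PT products to cancel cleanly against contributions from the non-special extensions. Tracking how the coloring constraint propagates from one crossing to the other across $\alpha_2, \beta_2$, and identifying the smoothed local picture with a planar-isotopic copy of $D_i$, is the heart of the argument.
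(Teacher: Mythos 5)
Your overall framework coincides with the paper's: localize to the bigon created by the move, stratify the colorings of the new diagram $\wD$ by what happens at the two new crossings, and compare with $\C(D)_j$. Your analysis of the case where both new crossings are non-special is correct and is exactly the paper's $d=0$ case: these extensions are in bijection with $\C(D)_j$, the induced subdiagrams are related by planar isotopy or an $\O2$ move, and the opposite signs of the two crossings keep the writhes $\o(D_i)$ unchanged. But the decisive step is precisely the one you defer: you assert that the extensions with one or two special crossings are \emph{either} forbidden \emph{or} cancel against the non-special extensions, without determining which alternative holds or verifying either. Since the $d=0$ extensions already account for all of $I(D)$, any nonzero leftover from the special extensions would destroy invariance, so the disjunction as stated cannot be left unresolved; this is a genuine gap, and it is exactly the point at which the statement fails for $\O2c$ and $\O2d$.

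What actually happens (and what your proof must establish) is the following. First, both crossings special is impossible: propagating the strict inequality $p<q$ required by Figure \ref{fig:arcs-coloring}a from one crossing of the bigon to the other forces $p<q$ and $q<p$ simultaneously, so $\C(\wD)_{j,2}=\emptyset$. Second, the colorings with exactly one special crossing in the fragment come in pairs, one with the special crossing at $x_1$ and one at $x_2$; the two members of a pair induce isotopic diagrams $\wD_1,\wD_2$ with equal writhes but carry opposite signs (the two crossings of the bigon have opposite signs), so $\sum_{\wC\in\C(\wD)_{j,1}}I_{2,j}(\wD)_{\wC}=0$. These cancel \emph{among themselves}, not against the non-special extensions, so your second alternative is not just unverified but aimed at the wrong pairing. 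Finally, note that the exact cancellation in the one-special case uses that for the coherently oriented moves $\O2a$, $\O2b$ the writhes of the induced subdiagrams agree for the two paired colorings; for $\O2c$, $\O2d$ they differ by $1$, leaving a factor $(1-a^{-2})$ that vanishes only at $a=\pm1$. A correct argument must therefore invoke the orientation pattern of the $\O2a$/$\O2b$ bigon at this precise point, which your proposal does not yet do.
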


\begin{proof}
Recall, that by definition the polynomial $I:=\sum\limits_{k=2}^\infty I_k t^k$. Hence it is enough to prove the statement for the polynomials $I_k$.

Let $k=2$, and $D$ and $\wD$ be two diagrams which differ by an application of one $\O2a$ move such that $\c(\wD)=\c(D)+2$.
For each $j\geq 0$ and $0\leq d\leq 2$ denote by $\C(\wD)_{j,d}$ the subset of $\C(\wD)_j$ which contains all colorings with $d$ \emph{special crossings} in the distinguished fragment of $\wD$.
\begin{figure}[htb]
\centerline{\includegraphics[height=2.2in]{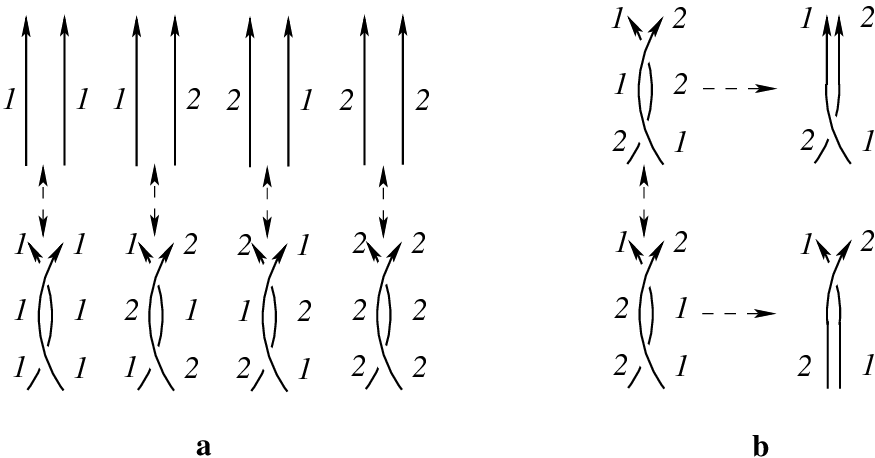}}
\caption{\label{fig:Rad2a-inv} Invariance under $\O2a$ move.}
\end{figure}
\begin{figure}[htb]
\centerline{\includegraphics[height=2.2in]{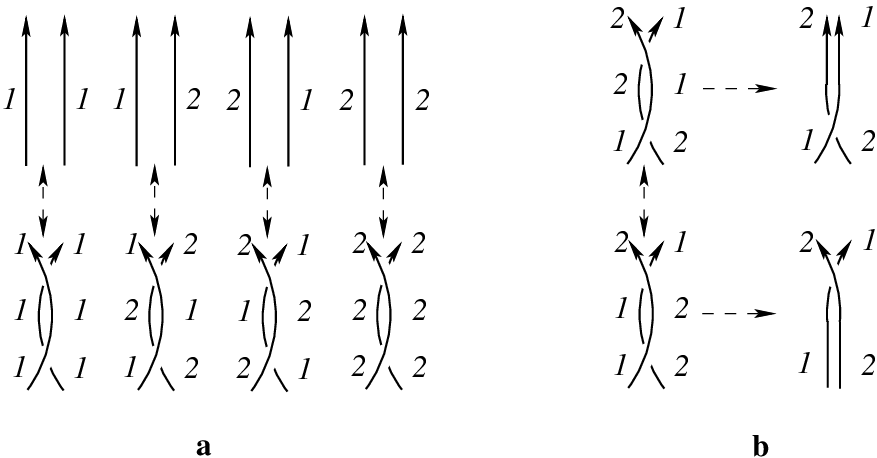}}
\caption{\label{fig:Rad2b-inv} The invariance under $\O2b$ move.}
\end{figure}
Let $j\geq 0$. We have a bijection between the sets $\C(D)_j$ and  $\C(\wD)_{j,0}$ as shown in Figure \ref{fig:Rad2a-inv}a. We assume here and further on that the coloring of arcs away from the distinguished fragments is identical for two corresponding colorings $C$ and $\wC$. For each two corresponding colorings $C\in\C(D)_j$ and $\wC\in \C(\wD)_{j,0}$ the diagrams $D_1$ and $\wD_1$ as well as the diagrams $D_2$ and $\wD_2$ are isotopic. We also have $\s(C)=\s(\wC)$, $\o(D_1)=\o(\wD_1)$ and $\o(D_2)=\o(\wD_2)$. Hence
\begin{equation}\label{eq:Rad2-case1}
I_{2,j}(D)_C=I_{2,j}(\wD)_{\wC}.
\end{equation}
Note that by definition the set $\C(\wD)_{j,1}$ contains all colorings of a diagram $\wD$ which have exactly $j$ special crossings, so that the distinguished fragment contains exactly one special crossing. For each $\widetilde{C'}\in \C(\wD)_{j,1}$ there exists a corresponding coloring $\widetilde{C''}\in \C(\wD)_{j,1}$ and vice-versa. This correspondence is shown in Figure \ref{fig:Rad2a-inv}b. In this case $\s(\widetilde{C'})=-\s(\widetilde{C''})$ and hence
\begin{equation}\label{eq:Rad2-case2}
\sum_{\wC\in\C(\wD)_{j,1}}I_{2,j}(\wD)_{\wC}=0.
\end{equation}
Since there could be no coloring with exactly two special crossings in the distinguished fragment of $\wD$, because it will violate the rule of Figure \ref{fig:arcs-coloring}, we have $\C(\wD)_{j,2}=\emptyset$ for each $j$. Combining this statement with equations \eqref{eq:Rad2-case1} and \eqref{eq:Rad2-case2} we obtain
\begin{equation*}
I_2(D)=\sum_{j=0}^\infty\sum_{C\in\C(D)_j}I_{2,j}(D)_C=
\sum_{j=0}^\infty\sum_{d=0}^2\sum_{\wC\in\C(\wD)_{j,d}}I_{2,j}(\wD)_{\wC}=I_2(\wD).
\end{equation*}

The invariance of $I_2$ under $\O2b$ move is proved similarly. The correspondence of colorings is summarized in Figure \ref{fig:Rad2b-inv}.

The proof of the invariance of the polynomials $I_k$ ($k\geq 3$) under $\O2a$ and $\O2b$ moves is very similar and is left to the reader
\end{proof}

Now we try to understand how $\O2c$ and $\O2d$ moves affect the polynomial $I_2$. Let $D$ and $\wD$ be two diagrams which differ by an application of one $\O2c$ move, such that $\c(\wD)=\c(D)+2$. In this case, there is a bijection between the sets $\C(D)_j$ and  $\C(\wD)_{j,0}$ as shown in Figure \ref{fig:Rad2c-inv}a.
\begin{figure}[htb]
\centerline{\includegraphics[height=2.2in]{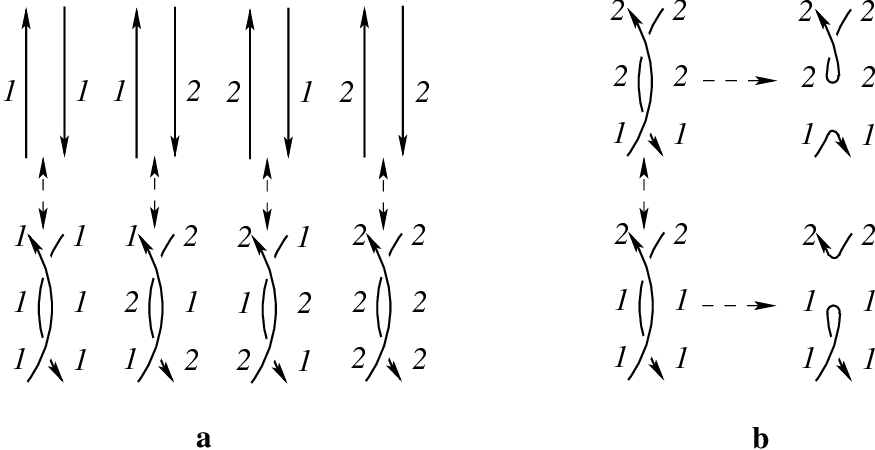}}
\caption{\label{fig:Rad2c-inv} Invariance under $\O2c$ move.}
\end{figure}
For each two corresponding colorings $C\in\C(D)_j$ and $\wC\in \C(\wD)_{j,0}$ the diagrams $D_1$ and $\wD_1$ as well as diagrams $D_2$ and $\wD_2$ are isotopic. We also have $\s(C)=\s(\wC)$, $\o(D_1)=\o(\wD_1)$ and $\o(D_2)=\o(\wD_2)$. Hence
\begin{equation}\label{eq:Rad2c-case1}
I_{2,j}(D)_C=I_{2,j}(\wD)_{\wC}.
\end{equation}
For each $\widetilde{C'}\in \C(\wD)_{j,1}$ there exists a corresponding coloring $\widetilde{C''}\in \C(\wD)_{j,1}$ and vice-versa. This correspondence is shown in Figure \ref{fig:Rad2c-inv}b ($\widetilde{C'}$ is on the top and $\widetilde{C''}$ is on the bottom).  Let $\wD_1'$, $\wD_2'$, $\wD_1''$ and $\wD''_2$ be the diagrams induced by $\widetilde{C'}$ and $\widetilde{C''}$ respectively. Let $L_1$ and $L_2$ be the links whose diagrams are  $\wD_1'$, $\wD''_1$ and $\wD'_2$, $\wD''_2$ respectively.  It follows that
\begin{align*}
&I_{2,j}(\wD)_{\widetilde{C'}}=\s(\widetilde{C'})z^ja^{\o(\wD'_1)}P(L_1)\cdot a^{\o(\wD'_2)}P(L_2),\\
&I_{2,j}(\wD)_{\widetilde{C''}}=\s(\widetilde{C''})z^ja^{\o(\wD''_1)}P(L_1)\cdot a^{\o(\wD''_2)}P(L_2).
\end{align*}
Note that in this case $\s(\widetilde{C'})=-\s(\widetilde{C''})$, $\o(\wD'_1)=\o(\wD''_1)+1$ and $\o(\wD'_2)=\o(\wD''_2)+1$. It follows that
\begin{equation}\label{eq:Rad2c-case2}
I_{2,j}(\wD)_{\widetilde{C'}}+I_{2,j}(\wD)_{\widetilde{C''}}=
\s(\widetilde{C'})z^j a^{\o(\wD'_1)+\o(\wD'_2)}P(L_1)\cdot P(L_2)(1-a^{-2}).
\end{equation}
Combining equations \eqref{eq:Rad2c-case1} and \eqref{eq:Rad2c-case2} we obtain
\begin{equation*}
I_2(\wD)-I_2(D)=0\quad\textrm{only if}\quad a=\pm1.
\end{equation*}

In the case of $\O2d$ move a similar analysis shows that
\begin{equation*}
I_2(\wD)-I_2(D)=0\quad\textrm{only if}\quad a=\pm1.
\end{equation*}
The correspondence of colorings is shown in Figure \ref{fig:Rad2d-inv}. Hence we proved the following corollary
\begin{cor}\label{cor:O2cd}
The polynomials $I_2(\pm1,z)$ are invariant under $\O2a$, $\O2b$, $\O2c$ and $\O2d$ moves.
\end{cor}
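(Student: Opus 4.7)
The plan is to assemble the pieces already in place. Invariance under $\O2a$ and $\O2b$ moves is immediate from Proposition~\ref{prop:O2}: that proposition gives the polynomial identity $I_2(a,z)(D)=I_2(a,z)(\wD)$, and specializing $a\to\pm 1$ preserves it. So nothing new has to be done for those two moves.

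For the $\O2c$ move, the calculation preceding the corollary has done almost all of the work. Stratifying $\C(\wD)_j = \C(\wD)_{j,0}\sqcup\C(\wD)_{j,1}\sqcup\C(\wD)_{j,2}$ by the number of special crossings in the distinguished fragment, equation~\eqref{eq:Rad2c-case1} matches $\C(\wD)_{j,0}$ term-by-term with $\C(D)_j$, while equation~\eqref{eq:Rad2c-case2} shows that the paired contribution from $\C(\wD)_{j,1}$ is a multiple of $(1-a^{-2})$, which vanishes at $a=\pm 1$. Summing over $j$ then gives $I_2(\pm 1,z)(\wD) = I_2(\pm 1,z)(D)$, provided the top stratum is under control. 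The $\O2d$ case runs by the same argument, using Figure~\ref{fig:Rad2d-inv} in place of Figure~\ref{fig:Rad2c-inv}; the same $(1-a^{-2})$ factor appears and is killed at $a=\pm 1$.

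The only point not spelled out in the preceding analysis is that $\C(\wD)_{j,2}=\emptyset$ for the $\O2c$ and $\O2d$ fragments. I expect to check this by direct inspection of the coloring rules in Figure~\ref{fig:arcs-coloring}: requiring both crossings in the two-crossing bigon to be special forces the pair $(p,q)$ of local arc colors to satisfy the strict ordering $p<q$ at each crossing, and the way the two arcs are shared between the two crossings in the $\O2c$ (resp.\ $\O2d$) tangle makes these two orderings incompatible. This combinatorial check is the main technical point of the argument, but it is a short case analysis rather than a real obstacle; once confirmed it closes the proof.
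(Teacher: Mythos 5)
Your proposal is correct and follows essentially the same route as the paper: the corollary there is obtained by exactly this assembly of Proposition~\ref{prop:O2} with the stratified computations \eqref{eq:Rad2c-case1}--\eqref{eq:Rad2c-case2} (and their $\O2d$ analogues), whose residual factor $(1-a^{-2})$ vanishes at $a=\pm1$. The one point you flag --- that $\C(\wD)_{j,2}=\emptyset$ for the $\O2c$ and $\O2d$ bigons --- is indeed needed and is left implicit in the paper (which states the analogous emptiness explicitly only for $\O2a$); your sketched incompatibility of the two $p<q$ constraints is the right mechanism, the underlying source being that the two crossings of a Reidemeister~2 bigon have opposite signs while the middle arcs are shared.
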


\begin{figure}[htb]
\centerline{\includegraphics[height=2.2in]{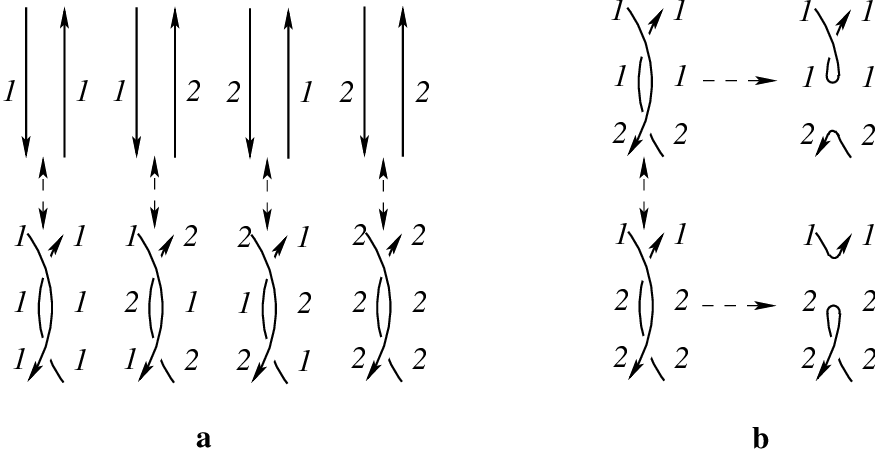}}
\caption{\label{fig:Rad2d-inv} Invariance under $\O2d$ move.}
\end{figure}

\begin{rem}\rm
Polynomial $I_2(1,z)$ determines the polynomial $I_2(-1,z)$ and vice-versa, hence further we will not discuss the polynomial $I_2(-1,z)$.
By inspecting the definition of $I_k(D)$ we see that
$$t^{-k}I(a,z,t)(D)|_{a=1,t=1}=I_k(1,z)(D)=\sum_{j=0}^\infty z^j\sum\limits_{C\in\C(D)_j}\s(C)\n(L_1)\cdot\ldots\cdot\n(L_k).$$
\end{rem}

\begin{prop}\label{prop:O3}
The polynomial $I_2$ is invariant under $\O3$ Reidemeister move shown in Figure \ref{fig:Reid-moves}.
\end{prop}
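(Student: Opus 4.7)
The plan is to argue as in Proposition \ref{prop:O2}: it suffices to check that $I_2(D)=I_2(\wD)$ when $D$ and $\wD$ differ by a single $\O3$ move inside the triangular region bounded by three strands meeting pairwise at three crossings. Outside this region the two diagrams are identical, so I would fix a coloring of every arc outside the triangle together with an assignment of colors in $\{1,2\}$ to the six arc-ends on the boundary of the triangle, and reduce to showing that the sum of contributions $I_{2,j}(D)_C$ over all valid completions inside the triangle equals the corresponding sum on the $\wD$-side.

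For each such completion, every one of the three triangle crossings is either \emph{special} (the pattern of Figure \ref{fig:arcs-coloring}a with $p=1$, $q=2$) or \emph{non-special} (the pattern of Figure \ref{fig:arcs-coloring}b). I would partition the completions according to which subset of triangle crossings is declared special, and, for each such subset on the $D$-side, construct the corresponding subset on the $\wD$-side. Because an $\O3$ move preserves the sign of each individual crossing and the total writhe of the diagram, the sign factor $\s(C)$ and the writhe exponents $\o(D_1),\o(D_2)$ coincide with those of the paired completion automatically.

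The main content is then to verify that, for each matched pair of completions, the smoothed links $L_1,L_2$ on the two sides have equal HOMFLY-PT polynomials. Smoothing the chosen special crossings inside the triangle reduces the $\O3$ move on the original diagrams to a sequence of Reidemeister moves on the smoothed diagrams $D_1,\wD_1$ and $D_2,\wD_2$: either an honest $\O3$ move (on whichever of $L_1,L_2$ still contains the full triangle), an $\O2$ move (when exactly one special crossing is smoothed), or an isotopy (when two or three special crossings are smoothed). Since the HOMFLY-PT polynomial is invariant under all Reidemeister moves, each matched pair contributes equally to $I_2(D)$ and $I_2(\wD)$.

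The main obstacle is a routine but lengthy case analysis: with the six boundary arc-ends each taking one of two colors there are $2^6$ boundary patterns, and within each pattern one must enumerate the internal completions, exhibit the bijection described above, and handle a small number of completions on one side that have no partner on the other and so must cancel in sign-reversed pairs, exactly as in equation \eqref{eq:Rad2-case2}. Most cases are handled uniformly by the symmetries $1\leftrightarrow 2$ and the rotations/reflections preserving the $\O3$ triangle, reducing the bookkeeping to a manageable number of pictures. The invariance of $I_k$ for $k\ge 3$ under $\O3$ would follow by the same method and be left to the reader.
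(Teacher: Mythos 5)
Your overall framework (fix the boundary data of the triangle, compare completions inside it) is reasonable, but the two mechanisms you rely on to match contributions both fail precisely for the colorings that have special crossings inside the $\O3$ triangle, which is where the content of the proposition lies. First, all three crossings of the $\O3$ fragment have the same (positive) sign, so unmatched completions cannot ``cancel in sign-reversed pairs'' as in \eqref{eq:Rad2-case2}; that mechanism is unavailable here. Second, and more importantly, when a special crossing of the triangle is smoothed, the resulting diagrams $D_1,D_2$ and $\wD_1,\wD_2$ are in general \emph{not} related by Reidemeister moves: after the smoothing, the remaining triangle crossings sit on differently colored arcs on the two sides of the move, so a crossing that is internal to $D_1$ (hence contributing to $P(L_1)$ and to $\o(D_1)$) may become a crossing between the two colors in $\wD$ (hence contributing to neither $\wD_1$ nor $\wD_2$). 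So neither the HOMFLY-PT values nor the writhe exponents are preserved by the pairing you describe, contrary to what you assert.

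The missing idea is that the correspondence must mix colorings with \emph{different} numbers of special crossings, and the termwise identity is the HOMFLY-PT skein relation rather than Reidemeister invariance. Concretely, the paper matches a single coloring $C_c\in\C(D)_{j,1}$ with a \emph{pair} of colorings $\wC_l\in\C(\wD)_{j,1}$ and $\wC_r\in\C(\wD)_{j+1,2}$ having the same boundary data, and proves
\begin{equation*}
I_{2,j}(D)_{C_c}=I_{2,j}(\wD)_{\wC_l}+I_{2,j+1}(\wD)_{\wC_r}
\end{equation*}
by applying the skein relation \eqref{eq:Homfly-skein} to one of the surviving triangle crossings (after an $\O2$ move on the smoothed diagram); the extra factor of $z$ carried by $\wC_r$, which has one more special crossing, is exactly the $z$ of the skein relation. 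A symmetric identity handles $\wC_c$ versus $C_l$, $C_r$, and only the $d=0$ colorings are handled by the straightforward bijection argument you propose. Without this regrouping across different values of $j$ and $d$, no termwise matching of the kind you describe exists, so the case analysis you outline would not close.
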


\begin{proof}

Let $D$ and $\wD$ be two diagrams which differ by an application of one $\O3$ move.
For each $j\geq 0$ and $d\geq 0$ denote by $\C(D)_{j,d}$ and $\C(\wD)_{j,d}$ the subsets of $\C(D)_j$ and $\C(\wD)_j$ respectively which contain all colorings with $d$ \emph{special crossings} in the distinguished fragments of $D$ and $\wD$. Note that the number of colors is $k=2$, hence for $d>2$ the sets $\C(D)_{j,d}$ and $\C(\wD)_{j,d}$ are empty.\\
\textbf{Case 1.} Let $d=0$. Note that if $j=0$, then $d=0$ and in this case we have $\C(D)_0=\C(D)_{0,0}$ and $\C(\wD)_0=\C(\wD)_{0,0}$. There is a bijection between the sets $\C(D)_{j,0}$ and $\C(\wD)_{j,0}$. This bijection is shown in Figure \ref{fig:Rad3-0-inv}. HOMFLY-PT polynomial is a link invariant and hence for each pair of corresponding colorings $C$ and $\wC$ we have
$$I_{2,j}(D)_C=I_{2,j}(\wD)_{\wC},$$
and hence
\begin{equation}\label{eq:d=0}
\sum_{C\in\C(D)_{j,0}}I_{2,j}(D)_C=\sum_{\wC\in\C(\wD)_{j,0}}I_{2,j}(\wD)_{\wC}.
\end{equation}
\begin{figure}[htb]
\centerline{\includegraphics[height=1.6in]{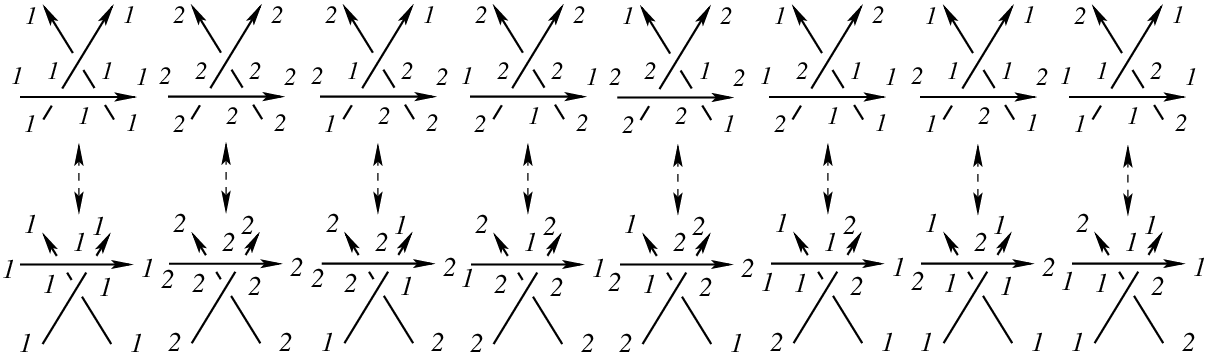}}
\caption{\label{fig:Rad3-0-inv} Correspondence of colorings with zero special crossings.}
\end{figure}
\begin{figure}[htb]
\centerline{\includegraphics[height=3.55in]{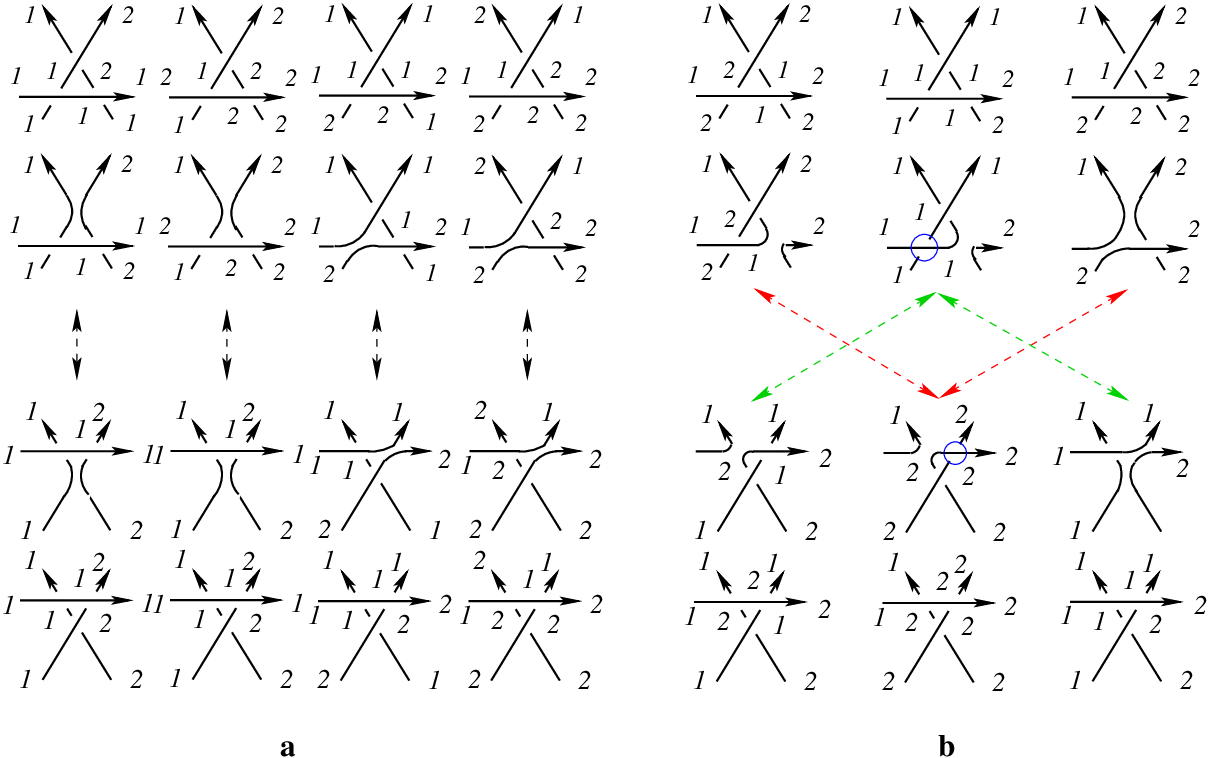}}
\caption{\label{fig:Rad3-12-inv} Correspondence between $\C(D)_{j,1}\cup \C(D)_{j+1,2}$ and $\C(\wD)_{j,1}\cup \C(\wD)_{j+1,2}$.}
\end{figure}

\textbf{Case 2.} Let $j\geq 1$ and $d=1,2$. There is a correspondence between the sets $\C(D)_{j,1}\cup \C(D)_{j+1,2}$ and $\C(\wD)_{j,1}\cup \C(\wD)_{j+1,2}$. It is shown in Figure \ref{fig:Rad3-12-inv}. In the top row we present colorings of distinguished fragment of diagrams in the set
$\C(D)_{j,1}\cup \C(D)_{j+1,2}$ and in the bottom row we present colorings of distinguished fragment of diagrams in the set $\C(\wD)_{j,1}\cup \C(\wD)_{j+1,2}$. Outside of these fragments the colorings of corresponding diagrams are the same. The second and third row from the top represent distinguished fragments of $D$ and $\wD$ respectively, after smoothing of all special crossings in these fragments. Now we are going to discuss this correspondence in greater detail.

Each pair of corresponding colorings $C$ and $\wC$ in Figure \ref{fig:Rad3-12-inv}a belongs to the sets $\C(D)_{j,1}$ and $\C(\wD)_{j,1}$ respectively. The induced diagrams $D_i$ and $\wD_i$ are isotopic for $i=1,2$. Hence
$$I_{2,j}(D)_{C}=I_{2,j}(\wD)_{\wC}.$$

The correspondence of the remaining colorings is much more complicated and it is shown in Figure \ref{fig:Rad3-12-inv}b. Let us denote by $C_l$ (respectively by $\wC_l$), $C_c$ (respectively by $\wC_c$) and $C_r$ (respectively by $\wC_r$) the colorings in the set $\C(D)_{j,1}\cup \C(D)_{j+1,2}$ (respectively in the set $\C(\wD)_{j,1}\cup \C(\wD)_{j+1,2}$) whose fragment is shown on the top-left (respectively on the bottom-left), top-center (respectively on the bottom-center) and top-right (respectively on the bottom-right) of Figure \ref{fig:Rad3-12-inv}b. Note that $C_l,C_c\in \C(D)_{j,1}$, $\wC_l,\wC_c\in \C(\wD)_{j,1}$ and $C_r\in \C(D)_{j+1,2}$, $\wC_r\in \C(\wD)_{j+1,2}$. We will show that
\begin{equation}\label{eq:compl-cor1}
\sum_{C_c\in\C(D)_{j,1}}I_{2,j}(D)_{C_c}=\sum_{\wC_l\in\C(\wD)_{j,1}}I_{2,j}(\wD)_{\wC_l}
+\sum_{\wC_r\in\C(\wD)_{j+1,2}}I_{2,j+1}(\wD)_{\wC_r}
\end{equation}

\begin{equation}\label{eq:compl-cor2}
\sum_{\wC_c\in\C(\wD)_{j,1}}I_{2,j}(\wD)_{\wC_c}=\sum_{C_l\in\C(D)_{j,1}}I_{2,j}(D)_{C_l}
+\sum_{C_r\in\C(D)_{j+1,2}}I_{2,j+1}(D)_{C_r}.
\end{equation}

We start with the proof of \eqref{eq:compl-cor1}. The corresponding colorings $C_c$, $\wC_l$ and $\wC_r$  (the correspondence is shown by green arrows) induce link diagrams $D_{1c}$, $D_{2c}$, $\wD_{1l}$, $\wD_{2l}$ and $\wD_{1r}$, $\wD_{2r}$. It is shown in Figure \ref{fig:Rad3-12-inv}b that the diagrams $D_{2c}$, $\wD_{2l}$ and $\wD_{2r}$ are isotopic. We have
\begin{align*}
&a^{\o(D_{1c})}P(D_{1c})-a^{\o(\wD_{1l})}P(\wD_{1l})-za^{\o(\wD_{1r})}P(\wD_{1r}):=\\
&a^{\o(D_{1c})}P\left(\ris{-4}{-1}{10}{conway1}{-1.1}\right)-a^{\o(\wD_{1l})}P\left(\ris{-4}{-1}{8}{conway2}{-1.1}\right)-
za^{\o(\wD_{1r})}P\left(\ris{-5}{-1}{9}{conway3}{-1.1}\right)=\\
&a^{\o(D_{1c})-1}\left(aP\left(\ris{-4}{-1}{10}{conway1}{-1.1}\right)
-a^{-1}P\left(\ris{-4}{-1}{10}{conway7}{-1.1}\right)-zP\left(\ris{-5}{-1}{9}{conway3}{-1.1}\right)\right)=0
\end{align*}
The second equality follows from the fact that $P$ is invariant under the second Reidemeister move, and the third equality is the HOMFLY-PT skein relation \eqref{eq:Homfly-skein} applied to the blue crossing in $D_{1c}$. All crossings in the distinguished fragments of $D$ and $\wD$ are positive. It follows that
$$\s(C_c)=\s(\wC_l)=\s(\wC_r).$$
This yields
\begin{align*}
&I_{2,j}(D)_{C_c}:=\s(C_c)z^ja^{\o(D_{1c})}P(D_{1c})\cdot a^{\o(D_{2c})}P(D_{2c})=\\
&\s(\wC_l)z^ja^{\o(\wD_{1l})}P(\wD_{1l})\cdot a^{\o(\wD_{2l})}P(\wD_{2l})+\\
&\s(\wC_r)z^{j+1}a^{\o(\wD_{1r})}P(\wD_{1r})\cdot a^{\o(\wD_{2r})}P(\wD_{2r}):=
I_{2,j}(D)_{\wC_l}+I_{2,j+1}(D)_{\wC_r},
\end{align*}
and equation \eqref{eq:compl-cor1} follows.

The proof of \eqref{eq:compl-cor2} is very similar to the proof of \eqref{eq:compl-cor1}. In this case the corresponding colorings $\wC_c$, $C_l$ and $C_r$  (the correspondence is shown by red arrows) induce link diagrams $\wD_{1c}$, $\wD_{2c}$, $D_{1l}$, $D_{2l}$ and $D_{1r}$, $D_{2r}$. It is shown in Figure \ref{fig:Rad3-12-inv}b that the diagrams $\wD_{1c}$, $D_{1l}$ and $D_{1r}$ are isotopic. In this case
\begin{align*}
&a^{\o(\wD_{2c})}P(\wD_{2c})-a^{\o(D_{2l})}P(D_{2l})-za^{\o(D_{2r})}P(D_{2r}):=\\
&a^{\o(\wD_{2c})}P\left(\ris{-4}{-1}{10}{conway4}{-1.1}\right)-
a^{\o(D_{2l})}P\left(\ris{-4}{0}{9}{conway6}{-1.1}\right)-za^{\o(D_{2r})}P\left(\ris{-4}{-1}{9}{conway5}{-1.1}\right)=\\
&a^{\o(\wD_{2c})-1}\left(aP\left(\ris{-4}{-1}{10}{conway4}{-1.1}\right)-a^{-1}P\left(\ris{-4}{0}{9}{conway8}{-1.1}\right)
-zP\left(\ris{-4}{-1}{9}{conway5}{-1.1}\right)\right)=0.
\end{align*}
Note that $\s(\wC_c)=\s(C_l)=\s(C_r)$ and equation \eqref{eq:compl-cor2} follows.

We combine equations \eqref{eq:d=0}, \eqref{eq:compl-cor1} and \eqref{eq:compl-cor2} and obtain
\begin{align*}
&I_2(D):=\sum_{j=0}^\infty\sum_{C\in\C(D)_j}I_{2,j}(D)_C=\sum_{j=0}^\infty\sum_{C\in\C(D)_{j,0}}I_{2,j}(D)_C+
\sum_{j=1}^\infty\sum_{C\in\C(D)_{j,1}}I_{2,j}(D)_C+\\
&\sum_{j=2}^\infty\sum_{C\in\C(D)_{j,2}}I_{2,j}(D)_C=
\sum_{j=0}^\infty\sum_{\wC\in\C(\wD)_{j,0}}I_{2,j}(\wD)_{\wC}+
\sum_{j=1}^\infty\sum_{\wC\in\C(\wD)_{j,1}}I_{2,j}(\wD)_{\wC}+\\
&\sum_{j=2}^\infty\sum_{\wC\in\C(\wD)_{j,2}}I_{2,j}(\wD)_{\wC}=
\sum_{j=0}^\infty\sum_{\wC\in\C(\wD)_j}I_{2,j}(\wD)_{\wC}:=I_2(\wD).
\end{align*}
This concludes the proof of the proposition.
\end{proof}

\begin{rem}\rm
The polynomials $I_2(D)$ and $I_2(1,z)(D)$ are not invariant under $\O1a$ and $\O1b$ Reidemeister moves shown in Figure \ref{fig:Reid-moves}. Let $D$, $D'$ and $D''$ diagrams of the unknot shown in Figures \ref{fig:unknot-diagrams}a, \ref{fig:unknot-diagrams}b and \ref{fig:unknot-diagrams}c respectively. Then $I_2(1,z)(D)=I_2(D)=0$, but $I_2(1,z)(D')=I_2(D')=z$ and $I_2(1,z)(D'')=I_2(D'')=-z$.
\end{rem}
\begin{figure}[htb]
\centerline{\includegraphics[height=0.95in]{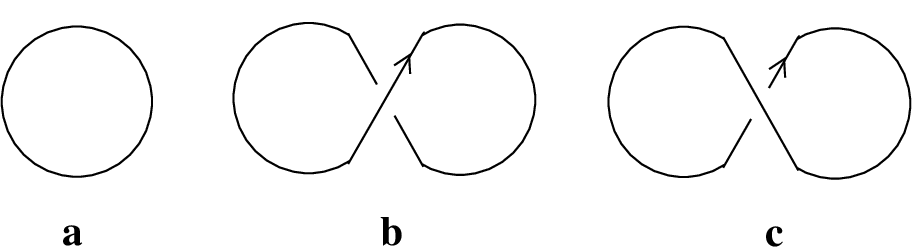}}
\caption{\label{fig:unknot-diagrams} Diagrams of the unknot.}
\end{figure}

\subsection{Proof of Theorem 2}

It follows from the work of Polyak \cite{P} that in order to prove the invariance of $t^{-2}I(a,z,t)|_{a=1,t=0}-\o z\n$ it is enough to prove its invariance under $\O1a$, $\O1b$, $\O2c$, $\O2d$ and $\O3$ moves.

We know that the writhe $\o$ and the Conway polynomial $\n$ are invariant under $\O2c$, $\O2d$ and $\O3$ moves. Note that by definition
\begin{equation}\label{I-2}
(t^{-2}I(a,z,t))|_{a=1,t=0}=I_2(1,z).
\end{equation}

It follows from Corollary \ref{cor:O2cd} and Proposition \ref{prop:O3} that it is enough to prove the invariance
of $I_2(1,z)-\o z\n$ under $\O1a$ and $\O1b$ moves.

Let $D$ and $\wD$ be two diagrams which differ by an application of $\O1a$ move such that $\c(\wD)=\c(D)+1$. For each $j\geq 0$ and $d=0,1$ denote by $\C(\wD)_{j,d}$ the subset of $\C(\wD)_j$ which contains all colorings with $d$ \emph{special crossings} in the distinguished fragment of $\wD$. Recall that
$$I_2(1,z)(D)=\sum_{j=0}^\infty z^j\sum\limits_{C\in\C(D)_j}\s(C)\n(L_1)\cdot\n(L_2).$$
The Conway polynomial is invariant under $\O1a$ move and hence
$$I_2(1,z)(D)=\sum_{j=0}^\infty \sum_{\wC\in\C(\wD)_{j,0}}I_{2,j}(1,z)(\wD)_{\wC}.$$
For each $\wC\in\C(\wD)_{j,1}$ the coloring of arcs in the distinguished fragment is shown below.
$$\ris{0}{0}{15}{Rad1a-inv}{-1.1}$$
We have
\begin{align*}
&I_2(1,z)(\wD)-I_2(1,z)(D)=\sum_{j=1}^\infty\sum_{\wC\in\C(\wD)_{j,1}}I_2(1,z)(\wD)_{\wC}=\\
&\sum_{\wC\in\C(\wD)_{1,1}}I_2(1,z)(\wD)_{\wC}=z\sum_{\wC\in\C(\wD)_{1,1}}\n(L_1)\cdot\n(L_2)=z\n(L).
\end{align*}
The second equality follows from the fact that Conway polynomial of split links equals zero, the third equality is a definition and the fourth equality follows from the fact that $L_2$ is the unknot and $L_1$ is a link $L$.
Note that
$$w(\wD)z\n(L)-w(D)z\n(L)=z\n(L),$$
and thus
$$I_2(1,z)(\wD)-w(\wD)z\n(L)-(I_2(1,z)(D)-w(D)z\n(L))=0.$$

The proof of the invariance of $I_2(1,z)-\o z\n$ under $\O1b$ move is very similar and is left to the reader.
\qed

\subsection{Proof of Theorem 1}
It follows from Propositions \ref{prop:O2} and \ref{prop:O3} that it is enough to prove the invariance of the polynomials $I_k$ ($k\geq 3$) under $\O3$ move.

Let $k\geq 3$, and $D$ and $\wD$ be two diagrams which differ by an application of one $\O3$ move.
For each $j\geq 0$ and $d\geq0$ denote by $\C(D)_{j,d,3}$ and $\C(\wD)_{j,d,3}$ the subsets of $\C(D)_{j,d}$ and $\C(\wD)_{j,d}$ respectively (these sets were defined in the proof of Proposition \ref{prop:O3}) which contain all colorings with $j$ \emph{special crossings}, and \emph{exactly} $d$ \emph{special crossings} and \emph{exactly} 3 \emph{different colors} in the distinguished fragments of $D$ and $\wD$. Note that when $d>3$ the sets $\C(D)_{j,d}$ and $\C(\wD)_{j,d}$ are empty. We start with the model case when $k=3$.
\begin{figure}[htb]
\centerline{\includegraphics[height=2in]{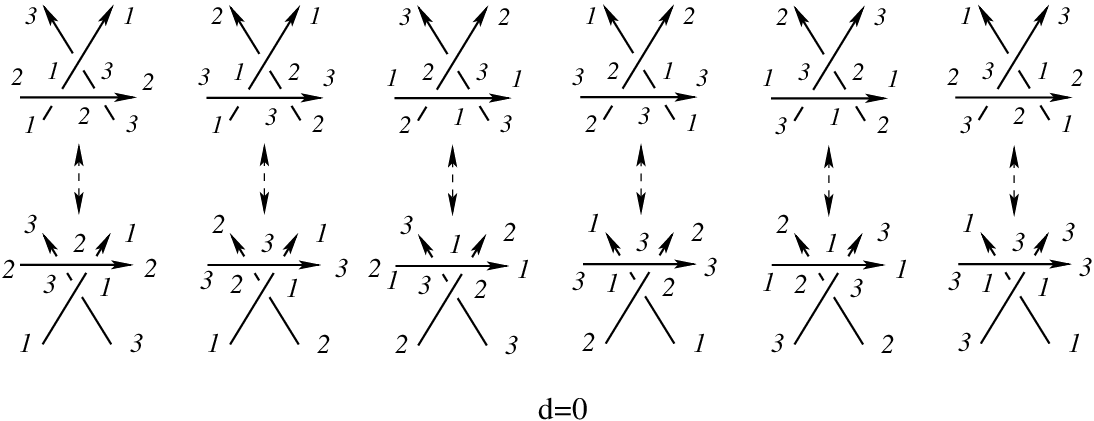}}
\caption{\label{fig:Rad3-3col0-inv} Correspondence of colorings with 0 special crossings and exactly 3 different colors in the fragments.}
\end{figure}
\begin{figure}[htb]
\centerline{\includegraphics[height=3.5in]{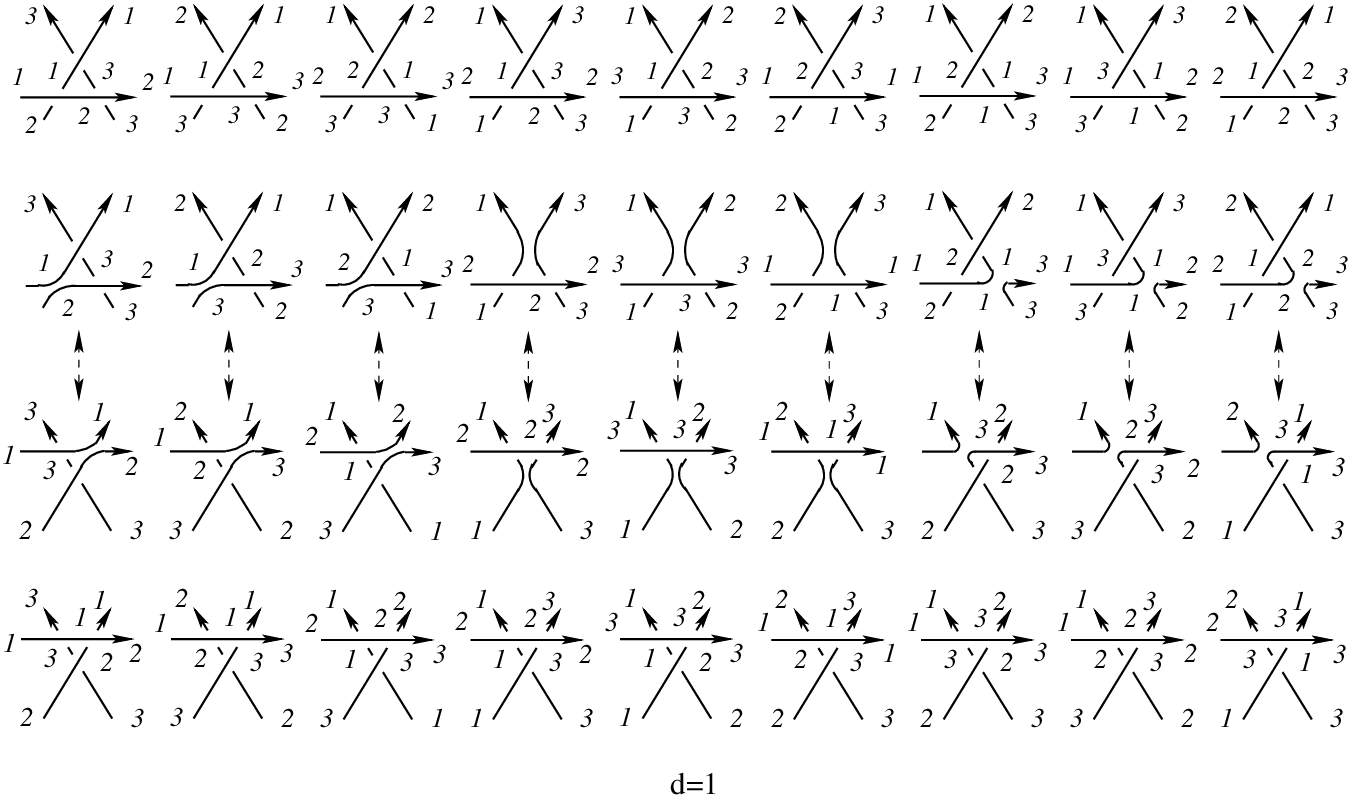}}
\caption{\label{fig:Rad3-3col1-inv} Correspondence of colorings with 1 special crossing and exactly 3 different colors in the fragments.}
\end{figure}
\begin{figure}[htb]
\centerline{\includegraphics[height=3.5in]{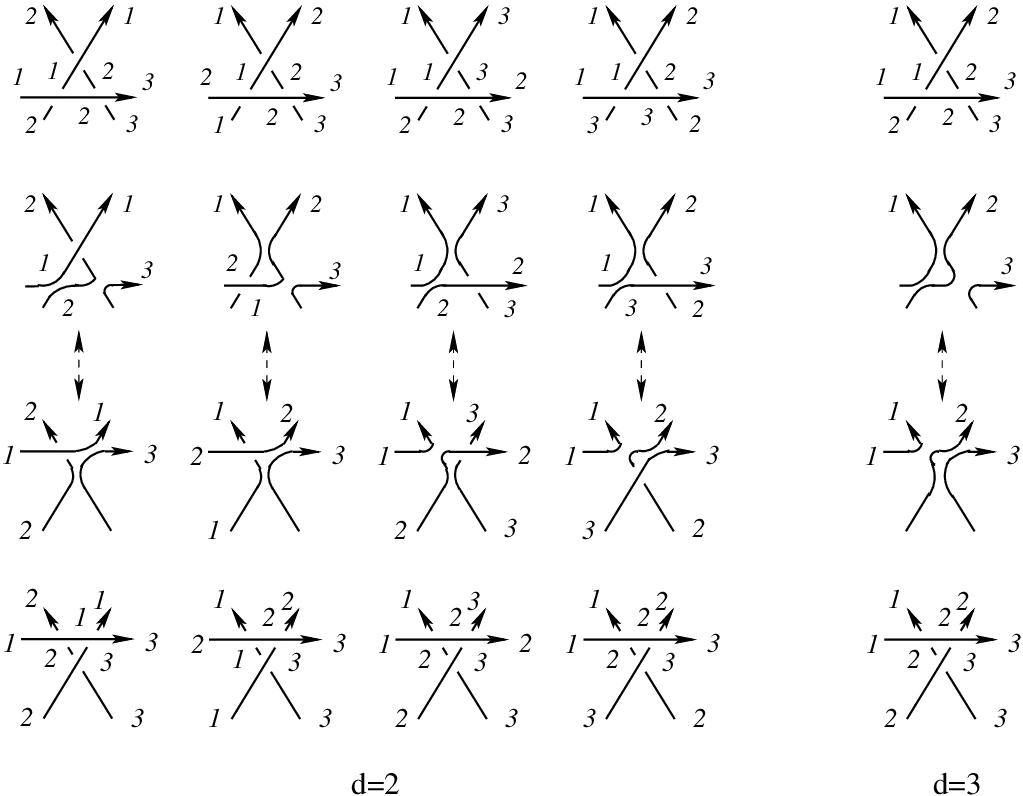}}
\caption{\label{fig:Rad3-3col23-inv} Correspondence of colorings with 2 and 3 special crossings and exactly 3 different colors in the fragments.}
\end{figure}

\textbf{Case 1.} Let $d=0$. There is a bijection between the sets $\C(D)_{j,0,3}$ and $\C(\wD)_{j,0,3}$. This bijection is shown in Figure \ref{fig:Rad3-3col0-inv}. It follows that for each pair of corresponding colorings $C$ and $\wC$ we have
$$I_{3,j}(D)_C=I_{3,j}(\wD)_{\wC},$$
and hence
\begin{equation}\label{eq:Rad3-3col0}
\sum_{C\in\C(D)_{j,0,3}}I_{3,j}(D)_C=\sum_{\wC\in\C(\wD)_{j,0,3}}I_{3,j}(\wD)_{\wC}.
\end{equation}
Same proof as the proof of case 1 in Proposition \ref{prop:O3} shows that
\begin{equation}\label{eq:Rad3-12col0}
\sum_{C\in\C(D)_{j,0}\setminus\C(D)_{j,0,3}}I_{3,j}(D)_C=\sum_{\wC\in\C(\wD)_{j,0}\setminus\C(\wD)_{j,0,3}}I_{3,j}(\wD)_{\wC}.
\end{equation}
\textbf{Case 2.} Let $1\leq d\leq 3$. In this case there is a correspondence between the sets $\C(D)_{j,d,3}$ and $\C(\wD)_{j,d,3}$. This bijection is shown in Figures \ref{fig:Rad3-3col1-inv} and \ref{fig:Rad3-3col23-inv}. It follows that for each pair of corresponding colorings $C$ and $\wC$ we have
$$I_{3,n,j}(D)_C=I_{3,n,j}(\wD)_{\wC},$$
and hence for each $1\leq d\leq 3$ we have
\begin{equation}\label{eq:Rad3-3col123}
\sum_{C\in\C(D)_{j,d,3}}I_{3,j}(D)_C=\sum_{\wC\in\C(\wD)_{j,d,3}}I_{3,j}(\wD)_{\wC}.
\end{equation}

Note that if $d=3$ then $\C(D)_{j,d,3}=\C(D)_{j,d}$ and $\C(\wD)_{j,d,3}=\C(\wD)_{j,d}$. It follows that
$\bigcup\limits_{d=1}^3(\C(D)_{j,d}\setminus \C(D)_{j,d,3})$ and $\bigcup\limits_{d=1}^3(\C(\wD)_{j,d}\setminus \C(\wD)_{j,d,3})$ contain colorings with \emph{1 or 2 special crossings} and with \emph{exactly 2 colors} in the distinguished fragments. Now the same proof as the proof of Proposition \ref{prop:O3} shows that
\begin{equation}\label{eq:Rad3-12col123}
\sum_{j=1}^\infty\sum_{d=1}^3\sum\limits_{C\in(\C(D)_{j,d}\setminus \C(D)_{j,d,3})}I_{3,j}(D)_C=
\sum_{j=1}^\infty\sum_{d=1}^3\sum\limits_{\wC\in(\C(\wD)_{j,d}\setminus \C(\wD)_{j,d,3})}I_{3,j}(\wD)_{\wC}.
\end{equation}

We combine equations \eqref{eq:Rad3-3col0}, \eqref{eq:Rad3-12col0}, \eqref{eq:Rad3-3col123} and \eqref{eq:Rad3-12col123}. This gives us the proof in the case of $k=3$ colors. The proof of the general case of $k>3$ colors follows immediately, since in this case the distinguished fragments of $D$ and $\wD$ may be colored by at most 3 different colors. Hence the same proof as in the case of $k=3$ colors proves the general case.
\qed

\begin{rem}\rm
Let $k\geq 3$ and $D$, $D'$ be two diagrams of the $(k-2)$-component unlink shown in Figure \ref{fig:unlink-Rad2}a. Then $I_k(D)=0$ and $I_k(D')=-\frac{k!}{6}z^2$. Let $D$ and $D''$ be two diagrams of the $(k-2)$-component unlink shown in Figure \ref{fig:unlink-Rad2}b. Then $I_k(D)=0$ and $I_k(D'')=-\frac{k!}{6}z^2$. This shows that the polynomials $I_k(D)$ and $I_k(1,z)(D)$ are not invariant under $\O2c$ and $\O2d$ Reidemeister moves and hence are not link invariants. However, a modification of the polynomials $I_k$ for $k\geq 3$, so that the resulting polynomials are link invariants, may be deduced from \cite{B1}.
\end{rem}
\begin{figure}[htb]
\centerline{\includegraphics[height=1.6in]{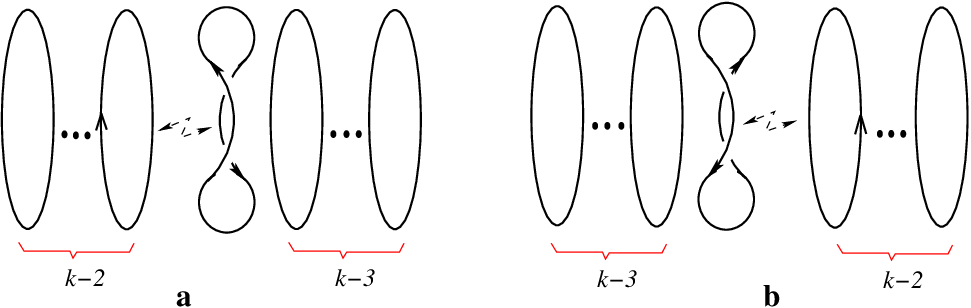}}
\caption{\label{fig:unlink-Rad2} Diagrams of the unknot which differ by $\O2c$ and $\O2d$ Reidemeister moves.}
\end{figure}

\subsection{Proof of Theorem 3}

For each $j\geq 0$ and $d=0,1$ denote by $\C(D_+)_{j,d}$ and $\C(D_-)_{j,d}$ the subsets of $\C(D_+)_j$ and $\C(D_-)_j$ respectively which contain all colorings with $d$ \emph{special crossings} in the distinguished fragments of $D_+$ and $D_-$.\\

\textbf{Case 1.} Let $d=0$. For each color $1\leq p\leq k$ denote by $\C(D_+)_{j,0,p}$, $\C(D_-)_{j,0,p}$ and $\C(D_0)_{j,p}$ the subsets of $\C(D_+)_j$, $\C(D_-)_j$ and $\C(D_0)_j$ respectively which contain all colorings whose arcs in the distinguished fragments are colored by $p$, see Figure \ref{fig:D-triple-p}a.
\begin{figure}[htb]
\centerline{\includegraphics[height=0.85in]{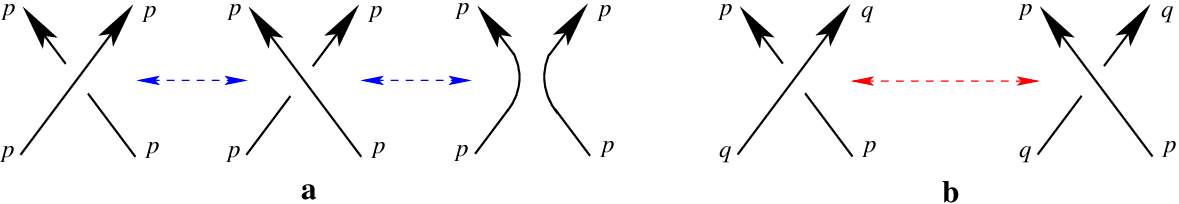}}
\caption{\label{fig:D-triple-p} Correspondence of colorings in case when $d=~0$. In Figure \textbf{b} we require that $p\neq q$.}
\end{figure}

Let $C_+$, $C_-$ and $C_0$ be the colorings in $\C(D_+)_{j,0,p}$, $\C(D_-)_{j,0,p}$ and $\C(D_0)_{j,p}$ respectively, such that they are identical outside the distinguished fragments. They induce link diagrams $D_{p,+}$, $D_{p,-}$ and $D_{p,0}$ which are colored by $p$ respectively. It follows from the HOMFLY-PT skein relation \eqref{eq:Homfly-skein} that
\begin{align*}
&a^{\o(D_{p,+})}P(L_{p,+})-a^{\o(D_{p,-})}P(L_{p,-})-za^{\o(D_{p,0})}P(L_{p,0})=\\
&a^{\o(D_{p,0})}\left(aP(L_{p,+})-aP(L_{p,-})-zP(L_{p,0})\right)=0
\end{align*}
Hence
$$I_{k,j}(D_+)_{C_+}-I_{k,j}(D_-)_{C_-}=zI_{k,j}(D_0)_{C_0}.$$
This yields
\begin{equation}\label{eq:skein-p}
\sum\limits_{C\in\C(D_+)_{j,0,p}}I_{k,j}(D_+)_{C}-\sum\limits_{C\in\C(D_-)_{j,0,p}}I_{k,j}(D_-)_{C}=
z\sum\limits_{C\in\C(D_0)_{j,p}}I_{k,j}(D_0)_{C}.
\end{equation}

There is a bijective correspondence between the sets $\C(D_+)_{j,0}\setminus\bigcup\limits_{p=1}^k\C(D_+)_{j,0,p}$ and $\C(D_-)_{j,0}\setminus\bigcup\limits_{p=1}^k\C(D_-)_{j,0,p}$. It is shown in Figure \ref{fig:D-triple-p}b. Note that for each two corresponding colorings $C_+$ and $C_-$ we have
$$I_{k,j}(D_+)_{C_+}=I_{k,j}(D_-)_{C_-}.$$
Combining this equality with \eqref{eq:skein-p} and summing over $p$ and $j$ we obtain
\begin{equation}\label{eq:skein-d=0}
\begin{array}{l}
\sum\limits_{j=0}^\infty\sum\limits_{C\in\C(D_+)_{j,0}}I_{k,j}(D_+)_{C}-
\sum\limits_{j=0}^\infty\sum\limits_{C\in\C(D_-)_{j,0}}I_{k,j}(D_-)_{C}= \\\\
z\sum\limits_{j=0}^\infty\sum\limits_{p=1}^k\sum\limits_{C\in\C(D_0)_{j,p}}I_{k,j}(D_0)_{C}.
\end{array}
\end{equation}
\textbf{Case 2.} Let $d=1$. Let $p,q\in\{1,\ldots,k\}$ such that $p<q$. Denote by $\C(D_+)_{j,(p,q)}$ and $\C(D_-)_{j,(q,p)}$ the subsets of $\C(D_+)_{j,1}$ and $\C(D_-)_{j,1}$ which contain all colorings such that the arcs in the distinguished fragment are colored as shown in Figures \ref{fig:D-triple-pq}a and \ref{fig:D-triple-pq}c respectively. We also denote by $\C(D_0)_{j,(p,q)}$ and $\C(D_0)_{j,(q,p)}$ the subsets of $\C(D_0)_j$ which contain all colorings such that the arcs in the distinguished fragment are colored as shown in Figures \ref{fig:D-triple-pq}b and \ref{fig:D-triple-pq}d respectively.
\begin{figure}[htb]
\centerline{\includegraphics[height=0.85in]{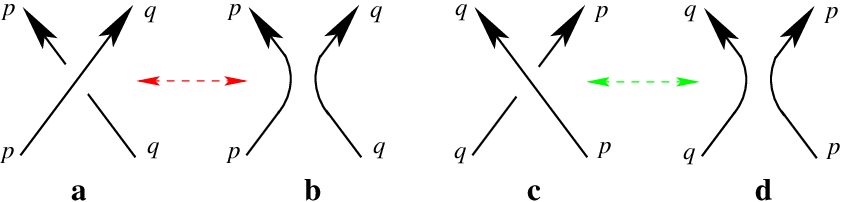}}
\caption{\label{fig:D-triple-pq} Correspondence of colorings in case when $d=1$.}
\end{figure}

There is a bijective correspondence between sets $\C(D_+)_{j,(p,q)}$ and $\C(D_0)_{j-1,(p,q)}$ as well as between sets $\C(D_-)_{j,(q,p)}$ and $\C(D_0)_{j-1,(q,p)}$. These bijections are shown in Figure \ref{fig:D-triple-pq}. They are presented by red and green arrows respectively. It follows that for each two corresponding colorings $C_+\in\C(D_+)_{j,(p,q)}$ and $C_0\in\C(D_0)_{j-1,(p,q)}$ we have
$$I_{k,j}(D_+)_{C_+}=zI_{k,j-1}(D_-)_{C_0},$$
and for each corresponding colorings $C_-\in\C(D_-)_{j,(q,p)}$ and $\widetilde{C_0}\in\C(D_0)_{j-1,(q,p)}$ we have
$$I_{k,j}(D_-)_{C_-}=-zI_{k,j-1}(D_-)_{\widetilde{C_0}}.$$
Summing over all such pairs $(p,q)$ and $j$ and noting that $\C(D_+)_{0,1}=\emptyset$ and $\C(D_-)_{0,1}=\emptyset$ we obtain
\begin{equation}\label{eq:skein-d=1}
\begin{array}{l}
\sum\limits_{j=0}^\infty\sum\limits_{C\in\C(D_+)_{j,1}}I_{k,j}(D_+)_{C}-
\sum\limits_{j=0}^\infty\sum\limits_{C\in\C(D_-)_{j,1}}I_{k,j}(D_-)_{C}=\\
z\sum\limits_{j=0}^\infty\sum\limits_{p<q}\left(\sum\limits_{C\in\C(D_0)_{j,(p,q)}}I_{k,j}(D_0)_{C}+
\sum\limits_{C\in\C(D_0)_{j,(q,p)}}I_{k,j}(D_0)_{C}\right).
\end{array}
\end{equation}
Now adding equations \eqref{eq:skein-d=0} and \eqref{eq:skein-d=1} we obtain
$$I_k(D_+)-I_k(D_-)=zI_k(D_0)$$
and the proof follows.
\qed

\subsection{Proof of Theorem 4}

By \eqref{I-2}, it is enough to show that the polynomials $I_2(1,z)(D)-w(D)z\n(L)$ and $zP'_a(L)|_{a=1}$ satisfy the same skein relation and receive the same values on the $r$-component unlink $O_r$. Let $D_r$ be a diagram of the unlink $O_r$, which consists of $r$ circles and no crossings. If $r\neq 2$, then $I_2(1,z)(D_r)=0$ because there are no colorings of $D_r$ with exactly 2 colors. If $r=2$, then there are exactly two colorings of $D_2$ with two colors. Hence by definition of $I_2(1,z)$ we have $I_2(1,z)(D_2)=2$. Note that for each $r$ we have $w(D_r)=0$. This yields
\begin{equation*}
I_2(1,z)(D_r)-w(D_r)z\n(O_r)=\begin{cases}
2,& \mbox{if}\quad r=2 \\
0, & \mbox{otherwise.}
\end{cases}
\end{equation*}
Let $D_+$, $D_-$ and $D_0$ be a Conway triple of link diagrams. It follows from Theorem \ref{thm:Conway-skein} and the Conway skein relation ~\eqref{eq:Conway-pol-skein} that
\begin{align*}
&I_2(1,z)(D_+)-w(D_+)z\n(L_+)-(I_2(1,z)(D_-)-w(D_-)z\n(L_-))+\\
&z\n(L_+)+z\n(L_-)=z(I_2(1,z)(D_0)-w(D_0)z\n(L_0)).
\end{align*}
The skein relation for the polynomial $zP'_a|_{a=1}$ follows directly from the skein relation for the HOMFLY-PT polynomial $P$. It is of the following form:
$$zP'_a(L_+)|_{a=1}-zP'_a(L_-)|_{a=1}+z\n(L_+)+z\n(L_-)=z^2P'_a(L_0)|_{a=1}.$$
We also have
\begin{equation*}
zP'_a(O_r)|_{a=1}=\begin{cases}
2,& \mbox{if}\quad r=2 \\
0,& \mbox{otherwise.}
\end{cases}
\end{equation*}
Hence both $I_2(1,z)(D)-w(D)z\n(L)$ and $zP'_a(L)|_{a=1}$ satisfy the same skein relation and normalization and the proof follows.
\qed

\subsection{Final questions and remarks}
\hfill
\begin{enumerate}
\item
Let $\Gamma$ be a group. Recall that
a function $\psi\colon \Gamma \to \B R$
is called a {\em quasi-morphism}
if there exists a real number $A\geq 0$ such that
$$
|\psi(gh) - \psi(g) - \psi(h)|\leq A
$$
Quasi-morphisms on groups of geometric origin
are related to different branches of mathematics see, for example \cite{Cal}. Let $J$ be a real-valued link invariant, then it defines a function
$$\widehat{J}\colon \B B_m\to \B R$$
by setting $\widehat{J}(\alpha):=J(\widehat{\alpha})$, where $\widehat{\alpha}$ is the link defined by a closure of the braid $\alpha$.

It is interesting to know whether some coefficients of the polynomial $I$, which was defined in Theorem 1, define quasi-morphisms, as explained above, on braid groups.

\item
Another interesting question is whether the polynomial $I$ is a complete invariant of conjugacy classes of braids. If yes, then it will give (plausibly the fastest) solution to the braid conjugacy problem.

\end{enumerate}

\bigskip

\textbf{Acknowledgments.} The author would like to thank Michael Polyak for helpful conversations. We would
like to thank the referee for careful reading of this paper and for his/her useful comments and remarks.

Part of this work has been done during the author's stay at Max Planck Institute for
Mathematics in Bonn. The author wishes to express his gratitude to the Institute for the support and excellent working conditions.

\bibliographystyle{alpha}

\bigskip

Max-Planck-Institut f$\ddot{\textrm{u}}$r Mathematik, 53111 Bonn, Germany\\
\emph{E-mail address:} \verb"brandem@mpim-bonn.mpg.de"

\end{document}